\pgfplotsset{compat=1.15}
\theoremstyle{plain}
\newtheorem{Theorem}{Theorem}[section]
\newtheorem{Lemma}[Theorem]{Lemma}
\newtheorem{Corollary}[Theorem]{Corollary}
\theoremstyle{definition}
\newtheorem{Remark}[Theorem]{Remark}
\newtheorem{Observation}[Theorem]{Observation}
\newtheorem{Definition}[Theorem]{Definition}
\DeclareMathOperator{\er}{\mathbb{R}}
\DeclareMathOperator*{\loc}{loc}
\DeclareMathOperator{\supp}{supp}
\title[G-N inequality for r. i. Banach function spaces]{Gagliardo--Nirenberg inequality via a new pointwise estimate}
\author{Karol Le\'snik}
\author{Tom\' a\v s Roskovec}
\author{Filip Soudský}
\address{K.~Le\'snik: Faculty of Mathematics and Computer Science,  Adam Mickiewicz University in Pozna{\'n},
  Uniwersytetu Pozna{\'n}skiego 4,
  61-614 Pozna{\'n}, Poland}
\email{klesnik@vp.pl}
\address{T.~G.~Roskovec: Faculty of Education, University of South Bohemia, Studentsk\' a 13, \v Cesk\' e Bud\v ejovice, Czech Republic}
\email{troskovec@jcu.cz}
\address{F.~Soudsk\' y: Faculty of Science, Humanities and Education, Technical University of Liberec, Studentská 1402/2, Liberec, Czech Republic}
\email{filip.soudsky@tul.cz}
\begin{document}

\begin{abstract}
We prove a new type of pointwise estimate of the Ka{\l}amajska--Mazya--Shapo\-shni\-kova type, where sparse averaging operators replace the maximal operator. It allows us to extend the Gagliardo--Nirenberg interpolation inequality to all rearrangement invariant Banach function spaces without any assumptions on their upper Boyd index, i.e. omitting problems caused by unboundedness of maximal operator on spaces close to $L^1$. In particular, we remove unnecessary assumptions from the Gagliardo--Nirenberg inequality in the setting of Orlicz and Lorentz spaces. 
The applied method is new in this context and may be seen as a kind of sparse domination technique fitted to the context of rearrangement invariant Banach function spaces. 
\end{abstract}

\maketitle
\footnotetext[0]{
2020 \textit{Mathematics Subject Classification}: 46E35, 35A23, 26D10}
\footnotetext[0]{\textit{Key words and phrases}:  Gagliardo--Nirenberg inequality, rearrangement invariant Banach function spaces,  Calder\'on--Lozanovskii spaces, sparse domination}

\section{Introduction}

There are a few forms of inequalities called (Sobolev--)Gagliardo--Nirenberg inequalities, which have been investigated in plenty of contexts and remain at the center of interest up to current days. Some involve only one derivative (see \cite{DDN23}, \cite{Lady}, \cite{MRR}, \cite{N}) and are more like Sobolev--type inequalities, while others deal with two derivatives of different orders and a function itself, like in \cite{BS}, \cite{BrMi},  \cite{DuDS23}, \cite{KC09} and \cite{WeiWY23}. We are interested in the latter kind, i.e. in the inequality of the type 
\begin{equation}\label{introductionGNri}
\big\|\nabla^j u\big\|_{Z}\leq C\big\|\nabla^k u\big\|^{\frac{j}{k}}_X\|u\|^{1-\frac{j}{k}}_Y,
\end{equation}
where, traditionally, by the expression $\|\nabla^k u\|_X$ we mean
\begin{equation}\label{nabla}
\|\nabla^{k} u\|_X:=\Big\|\sum_{|\alpha|=k}\Bigl|\frac{\partial^\alpha u}{\partial x^\alpha}\Bigr|\Big\|_X.
\end{equation}

Such inequality has been continuously investigated in many theoretical and practical contexts for years. More precisely, from the theoretical point of view, it is considered not only for different classes of spaces $X, Y,$ and $Z$, as Orlicz spaces (cf. \cite{KalPP} and \cite{GNOrlicz-Formica}), Lorentz spaces (cf. \cite{WeiWY23}), variable exponent spaces (cf. \cite{KC09}) or weighted Lebesgue spaces (cf. \cite{DuDS23}) but also for fractional derivatives (cf. \cite{BrMi}).

We are interested in \eqref{introductionGNri} for rearrangement invariant (r.i. for short) Banach function spaces.
Thus, we continue our investigations initiated in  \cite{LeRoSo23, FiFoRoSo2}. 
In \cite{LeRoSo23}, we proved that \eqref{introductionGNri} holds with $Z=X^{j/k}Y^{1-j/k}$ being the Calder\'on-Lozanovskii space and for natural numbers $0<j<k$, but under assumption that maximal operator is bounded on spaces $X, Y$ (see also \cite{FiFoRoSo2} for another, not necessarily equivalent, formulation). As is proved in \cite{LeRoSo23}, for given $X$ and $Y$, such choice of $Z$ is already optimal among rearrangement invariant space in almost all crucial cases for \eqref{introductionGNri} to hold. In the present paper, our goal is to remove an assumption on the boundedness of the maximal operator in spaces $X, Y$.  

The main tool to prove \eqref{introductionGNri} in both  papers mentioned above was Kałamajska--Mazya--Shaposhnikova inequality (see \cite{Kal, MS}), which states that
\begin{equation}\label{MS}
|\nabla^j u|\leq C (M\nabla^k u)^{\frac{j}{k}}(Mu)^{1-\frac{j}{k}},\ \ {\rm\ where\ }1\leq j < k,
\end{equation}
where $M$ is the classical maximal operator, and $C$ is a constant depending only on dimension.

Notice that \eqref{MS} applies quite directly to the context of r.i. Banach function spaces (also to each Banach function lattice, cf. \cite{KC09}), but we cannot get the full range of the r.i. Banach function spaces by this approach. In fact, it does not cover spaces close to $L^1$ (precisely, spaces with the upper Boyd index equal to $1$) due to the fact that the maximal operator is unbounded therein. On the other hand, it is known from the classical Gagliardo--Nirenberg inequality for Lebesgue spaces (\cite{Ni}, cf. \cite{FIFOROSO}) that \eqref{GNri} holds for $X=Y=Z=L^1$. It indicates that \eqref{GNri} itself does not reveal symptoms of weak type, and the restrictions on upper Boyd indices from  \cite{FiFoRoSo2, LeRoSo23} are not necessary but come instead from the method applied in the proof. At the same time, neither Gagliardo's nor Nirenberg's methods apply beyond Lebesgue spaces. Also, tools of interpolation theory are not efficient for spaces close to $L^1$. Therefore, to prove  \eqref{GNri} for all r.i. Banach function spaces spaces, we need a new approach. 

In the paper, we achieve the described goal and remove the mentioned assumptions. Namely, we prove the following: 

\begin{Theorem}[Gagliardo-Nirenberg interpolation inequality for r.i. Banach function spaces]\label{MT}
Let $X, Y$ be r.i. Banach function spaces over $\er^n$ and let $Z:=X^{j/k}Y^{1-j/k}$ be  the Calder\'on--Lozanovskii space, where $1\leq j< k$. Then, there exists a positive constant $C_a$ (independent of  dimension) such that for each $1\leq i\leq n$ 
inequality holds
\begin{equation}\label{pureGNri}
\left\|\frac{\partial^j u}{\partial x^j_i}\right\|_{Z}\leq C_a\left\|\frac{\partial^k u}{\partial x^k_i}\\\right\|^{\frac{j}{k}}_X\|u\|^{1-\frac{j}{k}}_Y
\end{equation}
for each $u\in W^{k,1}_{\loc}(\er^n)$. In particular, for each $u\in W^{k,1}_{\loc}(\er^n)$, there holds 
\begin{equation}\label{GNri}
\|\nabla^j u\|_{Z}\leq C_b\|\nabla^k u\|^{\frac{j}{k}}_X\|u\|^{1-\frac{j}{k}}_Y,
\end{equation}
where positive constant $C_b$ depends only on dimension $n$.
\end{Theorem}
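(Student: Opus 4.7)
The overall strategy is to replace the maximal-function pointwise estimate \eqref{MS}, which is too weak in r.i.\ spaces near $L^1$, by a pointwise bound of sparse type that is tailored to the given function $u$. Concretely, I would aim at an estimate of the form
\[
\left|\frac{\partial^j u}{\partial x_i^j}(x)\right| \leq C \sum_{Q\in\mathcal{S}}\Bigl(\langle |\partial_i^k u|\rangle_Q\Bigr)^{j/k}\Bigl(\langle |u|\rangle_Q\Bigr)^{1-j/k}\chi_Q(x),
\]
where $\mathcal{S}$ is a sparse family of intervals (or axis--parallel rectangles) depending on $u$ and $i$. The key point is that, unlike the centered maximal function, a sparse sum of averages is a Carleson object: after taking r.i.\ norms, its size is controlled by the $L^1$ masses of the averaged functions, so we never need to invoke boundedness of $M$ on $X$ or $Y$.

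First I would reduce the problem to a one--dimensional statement. Fixing a direction $i$ and slicing $\er^n$ by lines parallel to $e_i$, Fubini together with the rearrangement invariance of $Z=X^{j/k}Y^{1-j/k}$ reduces \eqref{pureGNri} to a one--variable inequality for absolutely continuous functions. The vector form \eqref{GNri} then follows immediately from \eqref{pureGNri} summed over $1\leq i\leq n$ together with definition \eqref{nabla}.

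On each slice I would construct $\mathcal{S}$ by a Calder\'on--Zygmund type stopping--time argument carried out \emph{simultaneously} on $|u|$ and $|\partial_i^k u|$: starting from a large dyadic interval, at each step one selects children on which either $\langle|u|\rangle$ or $\langle|\partial_i^k u|\rangle$ exceeds a fixed multiple of the parent average, and Taylor's formula with integral remainder is applied on each non--selected interval to expand $u^{(j)}$ in terms of $u$ at the endpoints and of $u^{(k)}$ integrated against a kernel of size comparable to the $k-j$ power of the scale. Balancing these two terms at the optimal scale (the classical one--point optimization behind \eqref{MS}) produces the geometric mean $\langle|\partial_i^k u|\rangle^{j/k}\langle|u|\rangle^{1-j/k}$ on each selected cube, and the sparseness of $\mathcal{S}$ follows automatically from the stopping--time rule.

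Third, to pass from the pointwise sparse bound to \eqref{pureGNri} I would rely on the defining lattice property of Calder\'on--Lozanovskii spaces: whenever $|h|\leq |f|^{j/k}|g|^{1-j/k}$ one has $\|h\|_{Z}\leq\|f\|_X^{j/k}\|g\|_Y^{1-j/k}$. Applied termwise to the sparse sum and combined with a standard Carleson--type principle that converts a sparse collection into a bounded operator on every r.i.\ Banach function space (including $L^1$), this yields the desired factorization $\|\partial_i^k u\|_X^{j/k}\|u\|_Y^{1-j/k}$ without any boundary assumption on Boyd indices.

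The principal obstacle is the construction in step two: producing a \emph{single} sparse tree of scales that simultaneously controls both $u$ and $\partial_i^k u$ while reproducing the correct exponents $j/k$ and $1-j/k$. In the original KMS proof, this balancing is hidden inside the maximal function and achieved pointwise; here, it must be realized coherently across all scales of the sparse family, which is exactly where the sparse domination philosophy takes the place of the unbounded operator $M$.
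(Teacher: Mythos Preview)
Your overall philosophy---replace the maximal operator by a sparse averaging operator tailored to $u$ and then use the Calder\'on--Lozanovskii factorization---is exactly the paper's. However, your reduction step contains a genuine gap. You propose to ``reduce \eqref{pureGNri} to a one--variable inequality'' by ``Fubini together with the rearrangement invariance of $Z$''. This does not work: general r.i.\ norms admit no Fubini-type decomposition into one-dimensional norms on slices (the paper discusses precisely this point in Section~\ref{questsection}, and it is the reason \eqref{pureGNri} and \eqref{GNri} are stated separately). The Fubini--H\"older argument you have in mind is valid only for Lebesgue spaces and already fails for Orlicz and Lorentz spaces. Consequently, even if you succeed in building a sparse family on every slice $\{\bar x\}\times\er$, those families vary with $\bar x$ and cannot be assembled into a bounded operator on $Z$ by any Fubini mechanism.

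The paper's way around this is to prove the pointwise sparse bound \emph{directly in $\er^n$} (Theorem~\ref{crux} and Lemma~\ref{Almpoint}): the one-dimensional intervals on each slice are thickened in the transverse directions by a radius $\delta_k$ chosen via uniform continuity of $u$, $\partial_i u$, $\partial_i^2 u$, producing genuine $n$-dimensional sets $R_k$ with overlap at most $5$; only then does Lemma~\ref{lemCM} apply. This gluing step is absent from your plan. Two further differences are worth noting. First, the paper's 1D sparse family is not obtained by a stopping time on the averages $\langle|u|\rangle$ and $\langle|\partial_i^k u|\rangle$, but by a level-set decomposition of $u'$ itself: the intervals are maximal intervals on which $u'$ remains in a dyadic band $[2^{k-2},2^{k+1})$, and the two estimates \eqref{Odhad1+}--\eqref{Odhad2+} follow because $u'$ must jump by a definite amount at the endpoints (controlling $\int|u''|$) while $u$ is monotone at a definite rate inside (controlling $\int|u|$). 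Second, the paper proves the pointwise bound only for $(j,k)=(1,2)$ and reaches general $(j,k)$ by an induction at the norm level (steps (A)--(B) in the proof of Theorem~\ref{MT}); it does not attempt a direct Taylor-remainder construction for arbitrary orders.
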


Therefore, in light of the optimality of the choice of $Z$ proved \cite{LeRoSo23}, it practically closes the discussion on the Gagliardo--Nirenberg inequality \eqref{introductionGNri} for r.i. Banach function spaces. In particular, it covers and strengthens analogous inequalities known previously for Orlicz or Lorentz spaces (\cite{KalPP, FiFoRoSo2, LeRoSo23, WeiWY23}). 

Notice also that, unlike as for Lebesgue spaces, in the case of general r.i. Banach function spaces, it is not evident if  \eqref{pureGNri} and \eqref{GNri} are equivalent. Thus, we believe it is worth stating \eqref{pureGNri} separately, although classically, the Gagliardo--Nirenberg inequality is rather understood as \eqref{GNri}  Detailed discussion on differences between \eqref{pureGNri} and \eqref{GNri} is postponed till Section \ref{questsection}. 

Anyhow, perhaps more interesting than the result itself is the method applied and its consequences, which, as a byproduct, gives new types of pointwise estimates for partial derivatives.

The method we employ to prove Theorem \ref{MT} may be regarded as a kind of sparse domination technique, which after Lerner's paper \cite{Le13a} appeared to be a very fruitful tool in Calder\'on--Zygmund theory and, in general, in the classical harmonic analysis \cite{Le13b, Le16, Lec17, LecMA17, LeOm20, LeLo22, DuDS23}.
Roughly speaking, the sparse domination method involves the replacement of the classical maximal operator by averaging operators (also called sparse operators), which, however, have to be chosen individually for each element. Once we can keep a uniform bound of all such operators, then the job is done. 

Let us briefly explain how it works in our context. Given a countable family of finite measure sets $\mathcal{P}$ (in principle not disjoint)  we define  formally on $L^1_{\loc}$ the operator $T_{\mathcal{P}}$ by the formula 
\begin{equation}\label{Oper}
T_{\mathcal{P}}u=\sum_{P\in \mathcal{P}}\fint_P u(t)dt  \chi_P,
\end{equation}
where $\chi_P$ stands for the characteristic function of $P$ and 
\[
\fint_P u(t)dt:=\frac{1}{|P|}\int_P u(t)dt.
\]
If additionally, we assume that 
\[
\sum_{P\in \mathcal{P}} \chi_P\leq K\chi_{\er^n},
\]
then it is easy to see that such $T_{\mathcal{P}}$ is bounded on $L^1$ and on $L^{\infty}$ with the norm  less or equal to $K$ in both cases, thus also bounded by $K$ in each r.i. Banach function space (cf. Lemma \ref{lemCM}).

We focus on the case $k=2, j=1$ since other cases can be reached by induction.
The idea is to replace the maximal operator $M$ from \eqref{MS}, which is unbounded on $L^1$, by the operators $T_{\mathcal{P}}$, which in turn are bounded on each r.i. Banach function space, to get the pointwise inequality with universal constant $C$ 
\begin{equation}\label{SD1}
|\nabla u|^2\leq C T_{\mathcal{P}}|\nabla^2 u| T_{\mathcal{P}}|u|.
\end{equation}
However, such an operator $T_{\mathcal{P}}$ cannot be chosen universally for all $u$'s, as is the maximal operator in \eqref{MS}, but the family $\mathcal{P}$ defining $T_{\mathcal{P}}$ has to depend on the function $u$. Consequently, we need to show that for each  $u\in \mathcal{C}_c^2(\er^n)$ there is a family $\mathcal{P}:=\mathcal{P}(u)$ such that 
 \eqref{SD1} holds with some universal constant independent of $u$ and $\mathcal{P}$. It still wouldn't be enough to get \eqref{GNri}, unless we know there  is another universal constant $K$, depending at most on the dimension $n$, such that all families  $\mathcal{P}$ satisfy 
$$
\sum_{P\in \mathcal{P}} \chi_P\leq K\chi_{\er^n}.
$$
Concluding, the essence of the method is contained in the following crucial theorem.

\begin{Theorem}\label{crux}
There is positive constant $C$ depending only on dimension $n$ such that for each $u\in \mathcal{C}^2_c(\er^n)$ and each $i\in\{1,\dots,n\}$ there exists a countable family $\mathcal{P}$ of measurable subsets of $\mathbb{R}^n$ satisfying
\begin{equation}\label{BodOdhad}
    \left|\frac{\partial u}{\partial x_i}\right|^2\leq C
    \sum_{P\in\mathcal{P}} \left(\fint_P \left|\frac{\partial^2 u}{\partial x_i^2}(t) \right|dt\fint_P |u(t)|dt \right)\chi_P 
\end{equation}
and 
    $$
    \sum_{P\in\mathcal{P}}\chi_P\leq 5 \chi_{\er^n}.
    $$
\end{Theorem}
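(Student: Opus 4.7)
The plan is to reduce to a one-dimensional Taylor-based estimate along direction $i$, and then to assemble the resulting slicewise covering intervals into an $n$-dimensional sparse family of sets. After relabelling we may assume $i = 1$; write $x = (t, x') \in \er \times \er^{n-1}$ and set $v_{x'}(t) := u(t, x')$, a $C_c^2$ function of $t$ for each fixed $x'$.

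\textbf{Step 1 (1D pointwise estimate).} The Taylor identity
$$v(t+h) - v(t-h) = 2h\,v'(t) + \int_0^h (h-s)\bigl[v''(t+s) - v''(t-s)\bigr]\,ds$$
yields, for any $h > 0$, the bound $|v'(t)| \leq \frac{|v(t+h)| + |v(t-h)|}{2h} + h\fint_{[t-h,\,t+h]}|v''|$. Averaging this over $h \in [L/2, L]$ converts the pointwise values of $v$ into a 1D integral average, giving
$$|v'(t)| \leq \frac{4}{L}\fint_{[t-L,\,t+L]}|v| + L\fint_{[t-L,\,t+L]}|v''|.$$
Using the compact support of $u$ (which makes the first term blow up as $L \to 0$ when $v(t)\neq 0$ and decay as $L \to \infty$, while the second term is bounded below for large $L$ by $\tfrac{1}{2}\|v''\|_1$), an intermediate-value argument produces $L^* = L^*(t, x') > 0$ at which the two terms coincide, and hence
$$|v_{x'}'(t)|^2 \leq 16\,\fint_{I(t,x')}|v_{x'}|\cdot\fint_{I(t,x')}|v_{x'}''|, \qquad I(t,x') := [t - L^*,\, t + L^*].$$

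\textbf{Steps 2--3 (sparse selection and $n$-dimensional assembly).} For each fixed $x'$, the family $\{I(t, x')\}_{t \in \er}$ of centred intervals covers $\er$, and a Vitali-type extraction on $\er$ yields a countable subfamily still covering $\er$ with overlap bounded by a small absolute constant. The slicewise 1D intervals must then be combined into $n$-dimensional measurable sets whose averages in $n$-dimensional Lebesgue measure reproduce the slicewise averages from Step 1. The natural candidates are tubes $P = I \times Q'$ with $Q' \subset \er^{n-1}$ a cube, constructed by a Calder\'on--Zygmund-type stopping-time refinement in the transverse variable: starting from a transverse cube of size comparable to the length of $I$, one refines $Q'$ until the maps $x' \mapsto \fint_I |u(\cdot, x')|$ and $x' \mapsto \fint_I |\partial_1^2 u(\cdot, x')|$ are essentially constant on each piece, then sets $P = I \times Q'$ on each stable piece. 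The dyadic geometry of the refinement is tuned so that the total pointwise overlap of the resulting family stays below $5$.

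\textbf{Main obstacle.} Step~3 is the crux. Steps~1 and 2 are essentially classical one-dimensional arguments; the novelty lies in the $n$-dimensional assembly. One must simultaneously obtain a dimension-independent overlap bound (the universal constant $5$) and ensure that the 1D pointwise estimate survives the passage from slice averages to genuine $n$-dimensional tube averages. Reconciling the parsimony demanded by a small overlap with the refinement required for the pointwise bound is the delicate part, and is where the true depth of the theorem lies.
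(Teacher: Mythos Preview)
Your Step~1 is sound (the minor worry when $v(t)=0$ but $v'(t)\neq 0$ is easily patched: then $\tfrac{4}{L}\fint|v|\to 2|v'(t)|>0$ while the second term $\to 0$, so the intermediate-value argument still applies). The real problems are in Steps~2 and~3.

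\textbf{Step 2 has a genuine gap.} The estimate you prove in Step~1 is
\[
|v'(t)|^2 \leq 16\,\fint_{I(t,x')}|v|\cdot\fint_{I(t,x')}|v''|,
\]
valid only for the \emph{specific} interval $I(t,x')$ centred at $t$. After a Vitali/Besicovitch extraction, a generic point $t$ lies in some $I(s,x')$ with $s\neq t$, and there is no reason whatsoever that the product of averages over $I(s,x')$ controls $|v'(t)|^2$: the length $L^*(s)$ was tuned for $s$, not for $t$. So the inequality you need on the sparse subfamily simply does not follow from what you have. This is not a technicality; it is the entire difficulty.

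\textbf{Step 3 is not a proof.} You concede this yourself. The description ``Calder\'on--Zygmund-type stopping-time refinement \dots\ tuned so that the total pointwise overlap stays below~5'' is a wish, not an argument, and it is unclear how any dyadic refinement in the transverse variable would yield a \emph{dimension-free} overlap bound such as~5.

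The paper's proof sidesteps both obstacles by a single idea you are missing: instead of choosing, for each point $t$, an interval of optimised length, it decomposes $\{\partial_1 u>0\}$ into dyadic \emph{level sets} $E_k=\{2^{k-1}\le \partial_1 u<2^k\}$ and, for $x\in E_k$, takes the maximal interval in direction $x_1$ on which $\partial_1 u$ stays in $E_{k-1}\cup E_k\cup E_{k+1}$. On such an interval $\partial_1 u$ is everywhere comparable to its value at $x$, so the 1D estimate holds \emph{uniformly on the whole interval}, not just at its centre --- this is exactly what your Vitali step lacks. Moreover the level-set structure gives the overlap bound for free (each point lies in at most three such intervals in 1D). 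In $\er^n$ one then thickens each interval by a small ball in the transverse directions, using the uniform continuity afforded by compact support; the resulting sets $R_k$ satisfy $R_k\subset E_{k-2}\cup\cdots\cup E_{k+2}$, whence $\sum_k\chi_{R_k}\le 5$.
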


In particular, from Theorem \ref{crux} it follows.

\begin{Corollary}\label{corcrux}
There are two positive constants $C$ and $K$ depending only on dimension $n$ such that for each $u\in \mathcal{C}^2_c(\er^n)$  there exists a countable family $\mathcal{P}$ of measurable subsets of $\mathbb{R}^n$ such that 
$$
    \left|\nabla u\right|^2\leq C
    T_{\mathcal{P}}|\nabla^2 u|T_{\mathcal{P}}| u| 
$$ 
and 
$$
    \sum_{P\in\mathcal{P}}\chi_P\leq K\chi_{\er^n}.
    $$
\end{Corollary}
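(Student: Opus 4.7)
The plan is to assemble the family $\mathcal{P}$ coordinate-by-coordinate from the families produced by Theorem \ref{crux}. First I would apply Theorem \ref{crux} for each $i\in\{1,\dots,n\}$ to obtain a countable family $\mathcal{P}_i$ of measurable subsets of $\er^n$ satisfying both \eqref{BodOdhad} for that particular $i$ and the local finiteness bound $\sum_{P\in\mathcal{P}_i}\chi_P\leq 5\chi_{\er^n}$. Then I would take $\mathcal{P}$ to be the disjoint-union multiset $\bigsqcup_{i=1}^{n}\mathcal{P}_i$, so that each set is counted according to the coordinate direction that produced it.

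Next I would estimate $|\nabla u|^2$ by summing the coordinatewise bounds. With the convention \eqref{nabla} one has $|\nabla u|=\sum_{i=1}^{n}\bigl|\partial u/\partial x_i\bigr|$, so Cauchy--Schwarz gives $|\nabla u|^2\leq n\sum_{i=1}^{n}|\partial u/\partial x_i|^2$. Applying \eqref{BodOdhad} inside each summand and then enlarging $|\partial^2 u/\partial x_i^2|\leq|\nabla^2 u|$ inside every average (again by \eqref{nabla}) collapses the double sum to the diagonal form
$$
|\nabla u|^2\leq nC\sum_{P\in\mathcal{P}}\left(\fint_P|\nabla^2 u|\,dt\right)\left(\fint_P|u|\,dt\right)\chi_P.
$$

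Finally, this diagonal sum is controlled by the full product $T_{\mathcal{P}}|\nabla^2 u|\cdot T_{\mathcal{P}}|u|$, because the latter, when expanded via \eqref{Oper}, is a sum over all pairs $(P,Q)\in\mathcal{P}\times\mathcal{P}$ of nonnegative contributions $\bigl(\fint_P|\nabla^2 u|\bigr)\bigl(\fint_Q|u|\bigr)\chi_P\chi_Q$, and the diagonal $P=Q$ already dominates what is needed. The global sparseness is immediate from the construction: $\sum_{P\in\mathcal{P}}\chi_P=\sum_{i=1}^{n}\sum_{P\in\mathcal{P}_i}\chi_P\leq 5n\,\chi_{\er^n}$, so one can set $K=5n$, with the new constant equal to $n$ times the constant from Theorem \ref{crux}. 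I do not expect any genuine obstacle here; the corollary is purely a packaging statement, and all the analytic substance---namely the construction of the families $\mathcal{P}_i$ and the pointwise control of $|\partial u/\partial x_i|^2$---is already contained in Theorem \ref{crux}.
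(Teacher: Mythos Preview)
Your proposal is correct and follows essentially the same route as the paper's own proof: apply Theorem \ref{crux} in each coordinate, take the union of the resulting families, use $|\nabla u|^2\le n\sum_i|\partial_i u|^2$, replace $|\partial^2 u/\partial x_i^2|$ by $|\nabla^2 u|$ inside the averages, and then bound the diagonal sum by $T_{\mathcal{P}}|\nabla^2 u|\,T_{\mathcal{P}}|u|$. Your use of the disjoint-union multiset $\bigsqcup_i\mathcal{P}_i$ rather than the set-theoretic union is a harmless (and arguably cleaner) bookkeeping choice, but it does not constitute a different argument.
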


From these, \eqref{GNri} and \eqref{pureGNri} follow immediately for compactly supported functions, which allows us to conclude Theorem \ref{MT} for separable r.i. Banach function spaces. Notice, however, that to prove it in full generality, we need a slightly more technical analogue of Theorem \ref{crux}, namely Lemma \ref{Almpoint} that will be formulated in Section 2.

The paper is organized as follows. Section \ref{sparsesection} contains the announced crucial pointwise estimates. Section \ref{GNsection} starts with some background on function spaces theory and contains the proof of Gagliardo--Nirenberg inequality for general r.i. Banach function spaces,  while in Section \ref{orliczsection}, we apply it to Orlicz and Lorentz spaces. Finally, the last  Section \ref{questsection} is devoted to a discussion of the advantages and consequences of the method. Note that throughout the paper, we use $K$ and $C$ for universal constants whose value and dependence differ from theorem to theorem; in proofs, the value of $C$ may differ in each step.

\section{Sparse domination}\label{sparsesection}

 We start with the one-dimensional case, which is, on the one hand, a particular case of Theorem \ref{crux} (regardless of assumption on the compact support) and Lemma \ref{Almpoint}, being, on the other hand, the crux to the multidimensional case. 
 
\begin{Lemma}\label{sparse1dim}
There exists a positive constant $C$, such that for every $u\in \mathcal{C}^2\cap (L^1+L^{\infty})(\er)$ there exists a countable family  $\mathcal{P}$ of bounded open intervals satisfying 
\begin{equation}\label{KLIC}
|u'|^2\leq C \sum_{P\in \mathcal{P}} \left(\fint_P|u''(s)|ds \fint_P|u(s)|ds\right)\chi_P
\end{equation}
and 
$$\sum_{P\in\mathcal{P}}\chi_P\leq 3\chi_{\er}.$$
\end{Lemma}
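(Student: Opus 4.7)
The plan is to combine a one-dimensional Taylor expansion with a stopping-time selection of the scale at each point, followed by a Besicovitch-type covering argument. Write $I(x,h) := (x-h, x+h)$. The first step is to integrate the Taylor identity
\[
u(y) = u(x) + u'(x)(y-x) + \int_x^y u''(s)(y-s)\,ds
\]
separately over $(x, x+h)$ and over $(x-h, x)$ and subtract. The $u(x)$-terms cancel, the $u'(x)$-terms combine into $h^2 u'(x)$, and Fubini together with $|y-s| \leq h$ bounds the remainder by $\tfrac{h^2}{2}\int_{I(x,h)}|u''|$. Dividing by $h^2$ yields the arbitrary-scale estimate
\[
|u'(x)| \leq \frac{2}{h} \fint_{I(x,h)} |u| + h \fint_{I(x,h)} |u''|, \qquad h > 0.
\]

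Next I would select a critical scale at each point with $u'(x) \neq 0$ by setting
\[
r(x) := \sup\Bigl\{h > 0 : \fint_{I(x,h)}|u| \geq h^2 \fint_{I(x,h)}|u''|\Bigr\}.
\]
For small $h$, a Taylor expansion shows the left side is bounded below while the right side vanishes, so the set is non-empty whenever $u'(x) \neq 0$. The assumption $u \in L^1 + L^\infty$ is precisely what forces $r(x) < \infty$: $\fint_{I(x,h)}|u|$ stays bounded, while $h^2 \fint_{I(x,h)}|u''|$ grows unboundedly with $h$ whenever $u'' \not\equiv 0$ near $x$. Continuity of the averages in $h$ upgrades the defining supremum to equality at $h = r(x)$, and feeding this balance back into the Taylor bound yields the desired localised inequality
\[
|u'(x)|^2 \leq C \fint_{I_x}|u| \cdot \fint_{I_x}|u''|, \qquad I_x := I(x, r(x)),
\]
with an absolute constant $C$.

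With the family $\{I_x\}$ in hand, the third step is to extract a countable subfamily $\mathcal{P}$ by a one-dimensional Besicovitch/Vitali-type covering lemma applied to these intervals centred at their defining points. In $\er$ the sharp overlap constant for covers by centred intervals is $2$, and the slight slack up to $3$ in the statement should be exactly what absorbs a mild enlargement needed in the final step. This produces both the structural bound $\sum_{P \in \mathcal{P}} \chi_P \leq 3 \chi_\er$ and a cover of $\{u' \neq 0\}$.

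The main obstacle lies in transferring the pointwise estimate from each chosen centre $x$ to an arbitrary $y \in P = I_x \in \mathcal{P}$: the second step produces a bound only at $x$ and uses the intrinsic scale $r(x)$, whereas at $y$ the relevant intrinsic scale is $r(y)$, which may be much smaller or larger. My plan is to refine the covering step so that for every $y$ some chosen $P \in \mathcal{P}$ containing $y$ has length comparable to $r(y)$; then the arbitrary-scale Taylor inequality from the first step, re-applied inside such a $P$ and combined with monotonicity of $\fint$ under inclusion, should recover the pointwise inequality at $y$ up to an absolute constant. Reconciling this comparability with the overlap bound, and simultaneously handling the case $u \in L^1 + L^\infty$ rather than $\mathcal{C}^2_c$, is the genuinely technical heart of the lemma.
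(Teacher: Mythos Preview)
Your approach via Taylor expansion plus a balance-scale stopping time is genuinely different from the paper's, and it can be made to work, but not by the route you sketch in the final paragraph. The ``monotonicity of $\fint$ under inclusion'' you invoke is simply false (averages are not monotone), and engineering a cover in which some selected $P\ni y$ has $|P|\sim r(y)$ for \emph{every} $y$ is both hard to arrange via Besicovitch and unnecessary. The obstacle you flag actually has a one-line resolution you overlooked: the balance condition $\fint_{I_x}|u|=r(x)^2\fint_{I_x}|u''|$ gives $r(x)\fint_{I_x}|u''|=\bigl(\fint_{I_x}|u|\cdot\fint_{I_x}|u''|\bigr)^{1/2}$, so for any $y\in I_x$
\[
|u'(y)-u'(x)|\le\int_{I_x}|u''|=2r(x)\fint_{I_x}|u''|=2\Bigl(\fint_{I_x}|u|\cdot\fint_{I_x}|u''|\Bigr)^{1/2},
\]
and combined with your own bound $|u'(x)|\le 3r(x)\fint_{I_x}|u''|$ this yields $|u'(y)|^2\le 25\,\fint_{I_x}|u|\cdot\fint_{I_x}|u''|$ for \emph{every} $y\in I_x$. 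No enlargement, no scale matching. After this a one-dimensional Besicovitch selection on $\{I_x:u'(x)\ne 0\}$ finishes the argument; the only residual technicality is that standard Besicovitch wants bounded radii, so you must either verify $\sup r(x)<\infty$ on $\{u'\ne 0\}$ or invoke a suitable variant.

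For comparison, the paper sidesteps the transfer issue entirely by building the intervals from level sets of $u'$ rather than from a balance scale: for $x$ in $E_k=\{2^{k-1}\le u'<2^k\}$ it takes the maximal interval about $x$ contained in $E_{k-1}\cup E_k\cup E_{k+1}$. On such an interval $u'$ varies by at most a factor $8$, so an estimate at any single point automatically holds at all others, and the overlap bound $3$ is immediate since a point of $E_k$ can lie only in intervals generated from $E_{k-1}\cup E_k\cup E_{k+1}$. The two averages are controlled because $u'$ must change by a definite amount at an endpoint (giving a lower bound on $\int|u''|$) and is bounded below throughout (giving a lower bound on $\int|u|$). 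This construction makes both overlap and transfer transparent without any covering lemma; your approach is more flexible but needs the observation above to close.
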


\begin{proof}
Evidently, the inequality \eqref{KLIC} needs to be verified only for $x\in\{u'\neq 0\}$. 
We will design the family $\mathcal{P}_+$ covering the set $\{u'>0\}$, and then we will prove its desired properties. Once they are verified, the proof is simply finished by the choice $\mathcal{P}=\mathcal{P}_+\cup \mathcal{P}_-$, where $\mathcal{P}_-$ is defined analogously to cover the set $\{u'<0\}$.

Define exhausting pairwise disjoint covering of $\{u'>0\}$
$$
E_k:=\{2^{k-1}\leq u'< 2^{k}\},
$$
where $k\in \mathbb{Z}$. 
Suppose that $x\in E_k$ for some $k$. Let us denote 
\begin{equation}\label{Intervaly}
\begin{aligned}
y:=y(x)=&\sup\{\tau\in \er:(x,\tau)\subset E_{k-1}\cup E_k\cup E_{k+1}\},\\
z:=z(x)=&\inf\{\tau\in \er:(\tau,x)\subset E_{k-1}\cup E_k\cup E_{k+1}\}.
\end{aligned}
\end{equation}

Notice that for arbitrary $x\in\{u'>0\}$ interval $(z(x),y(x))$ is nonempty and bounded. In fact, if $x\in E_k$, then $x$ is in the interior of $E_{k-1}\cup E_k$, thus $(z(x),y(x))$ is nonempty. For $s\in (x,y(x))$ one has
$$
u(s)=u(x)+\int_x^s u'(\tau)d\tau\geq u(x)+(s-x)\frac{u'(x)}{4}.
$$
Thus, if $y(x)$ was infinite, the function $u$ would not be in the space $L^1+L^\infty$. It means that $y(x)<\infty$. Similarly, $z(x)>-\infty$.

Define
$$
\mathcal{P}_+:=\left\{(y(x),z(x))\right\}_{x\in \{u'>0\}}.
$$
Now, we prove the bound on the overlapping 
\begin{equation}\label{CLICK3+}
\sum_{I\in\mathcal{P}_+}\chi_I\leq 3\chi_{\er}.
\end{equation}
In fact, for each integer $k$ and given two points $s,t\in E_k$ one of two possibilities holds;
$$
\textup{either }(z(s),y(s))=(z(t),y(t))\quad \textup{or}\quad  (z(s),y(s))\cap (z(t),y(t))=\emptyset.
$$
In particular, $\mathcal{P}_+$ is countable. 
Furthermore, if $t\in E_k$, then it may happen that $t\in(z(s),y(s))$ only for $s\in E_{k-1}\cup E_k\cup E_{k+1}$, which means that \eqref{CLICK3+} holds.
It remains to prove \eqref{KLIC} on $\{u'>0\}$.

Firstly, we will explain that 
\begin{equation}\label{Odhad1+}
    \frac{u'(x)}{4} \leq (y-z)\fint_z^y |u''(s)|ds,
\end{equation}
when $x\in(z,y):=(z(x),y(x))$.
In fact, let $x\in E_k$, then $2^{k-1}\leq u'(x)< 2^{k}$, while either $u'(y)=2^{k-2}$ or $u'(y)=2^{k+1}$. In either case, we estimate
\[
\int_z^y |u''(s)|ds \geq \int_x^y |u''(s)|ds \geq \left|\int_x^y u''(s)ds\right| =|u'(y)-u'(x)|\geq \frac{1}{4} u'(x).
\]
This implies \eqref{Odhad1+}.

Secondly, we claim that
\begin{equation}\label{Odhad2+}
\frac{u'(x)}{32}\leq \frac{\fint_y^z|u(s)|ds}{y-z}.
\end{equation}
Since $u\in\mathcal{C}^2([z,y])$, there exists some $c\in(z,y)$ satisfying
$$
u(c)=\fint_z^y u(\tau)d\tau.
$$ 
By the standard argument (cf. for example \cite[Lemma 3.1]{FIFOROSO}) we get
$$
\begin{aligned}
    \int_z^y |u(s)|ds&\geq \frac{1}{2}\int_z^y|u(s)-u(c)|ds=\frac{1}{2}\int_z^y\left|\int_c^s u'(\tau)d\tau\right|ds.
\end{aligned}
$$
However,  $(z,y)\subset  E_{k-1}\cup E_k\cup E_{k+1}$, thus for each $z<s<y$
\[
\left|\int_c^s u'(\tau)d\tau\right|\geq 2^{k-2}|s-c|\geq \frac{1}{4}u'(x)|s-c|.
\]
Consequently, we get
$$
\begin{aligned}
     \int_z^y |u(s)|ds&{\geq} \frac{u'(x)}{8}\int_z^y|s-c|ds\geq \frac{u'(x)(y-z)^2}{32}.
\end{aligned}
$$
Furthermore, that implies \eqref{Odhad2+}.
Concluding, by estimations \eqref{Odhad1+}, \eqref{Odhad2+}, for each $x\in \{u'>0\}$ there holds
$$
 u'(x)^2\leq 128 \fint_{z(x)}^{y(x)}|u''(s)|ds\fint_{z(x)}^{y(x)}|u(s)|ds.   
$$
In the final thoughts, we remind that union of $\mathcal{P}=\mathcal{P}_+\cup\mathcal{P}_-$ does not affect desired property \eqref{KLIC} nor the bound on the overlap within the family.
\end{proof}

Let us point out some observations on the above proof that will be used to prove a more complicated multidimensional case.
\begin{Observation}\label{after}
  In the proof above we have shown precisely that for $$x\in \{2^{k-1}\leq u'<2^k\}$$ there holds 
  \begin{itemize}
      \item[(a)]
\[
u'(x)\leq 4\int_{y(x)}^{z(x)}|u''(s)| ds,
\]
      \item[(b)]
\[
u'(x)\leq \frac{32}{(y(x)-z(x))^2}\int_{y(x)}^{z(x)}|u(s)| ds.
\]
\end{itemize}
Observe that if $I$ is some interval such that $(z(x),y(x))\subset I$ and $I\subset E_{k-d}\cup...\cup E_{k+d}$, $d>1$, then there holds as well
\begin{itemize}
      \item[(a')]
\[
u'(x)\leq 4\int_{I}|u''(s)| ds,
\]
\item[(b')]
\[
u'(x)\leq 2^d\frac{32}{|I|^2}\int_I|u(s)| ds.
\]
\end{itemize}
Point (a') is evident, while (b') follows from the observation that only the lower estimate of $u'$ was used to get (b). 
\end{Observation}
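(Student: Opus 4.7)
My plan is to recognize that parts (a) and (b) are already proved in Lemma~\ref{sparse1dim}: they are nothing but rearrangements of \eqref{Odhad1+} and \eqref{Odhad2+}. Indeed, \eqref{Odhad1+} gives $u'(x)/4 \leq (y-z)\fint_z^y|u''|\,ds = \int_z^y |u''|\,ds$, which is (a) after multiplying by $4$; similarly \eqref{Odhad2+} reads $u'(x)(y-z)/32 \leq \fint_z^y|u|\,ds$, which is (b) after rearranging. So no additional argument is required for (a) and (b); they are merely a cleaner restatement of what was established inside the proof of Lemma~\ref{sparse1dim}.

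For (a'), the plan is a one-liner: since $|u''|\geq 0$ and $(z(x),y(x))\subset I$, enlarging the integration set only increases the integral, so $\int_z^y|u''|\,ds \leq \int_I |u''|\,ds$, and (a) immediately implies (a').

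For (b') I would repeat the argument giving \eqref{Odhad2+} verbatim, but on $I$ in place of $(z,y)$. Concretely, pick $c\in I$ with $u(c)=\fint_I u(\tau)\,d\tau$; the same ``standard argument'' (i.e.\ $|u(c)|\cdot|I|=|\int_I u|\leq \int_I|u|$ combined with the triangle inequality) gives
\[
\int_I |u(s)|\,ds \;\geq\; \tfrac{1}{2}\int_I |u(s)-u(c)|\,ds \;=\; \tfrac{1}{2}\int_I \Bigl|\int_c^s u'(\tau)\,d\tau\Bigr|\,ds.
\]
The only place the argument changes is the pointwise lower bound on $u'$ on the enlarged integration set: on $I\subset E_{k-d}\cup\dots\cup E_{k+d}$ the worst stratum is $E_{k-d}$, giving $u'(\tau)\geq 2^{k-d-1}$, and combined with $u'(x)<2^k$ this yields $u'(\tau)\geq 2^{-d-1}u'(x)$, which is weaker than the previously used $u'(x)/4$ by a factor $2^{d-1}$. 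Propagating this through the rest of the calculation, and using $\int_I|s-c|\,ds\geq |I|^2/4$ (the minimum of this quantity over $c\in I$, attained at the midpoint), yields (b') with constant of order $2^d\cdot 32$, matching the claim.

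The only subtlety worth flagging is the book-keeping of the multiplicative constant: one has to track the degradation factor $2^{d-1}$ coming from the weaker Bernstein-type lower bound on $u'$ over the thicker union of strips, and to note that the mean-value trick $\int_J|u|\geq \tfrac12\int_J|u-u(c)|$ works equally well on any bounded interval $J$, not only on $(z,y)$. Neither point involves a new idea; the observation is essentially a one-parameter extension of the proof of Lemma~\ref{sparse1dim} and, as the authors note, (a') is immediate while in (b') ``only the lower estimate of $u'$'' is being re-used.
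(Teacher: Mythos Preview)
Your proposal is correct and follows exactly the approach the paper indicates: (a) and (b) are indeed just rearrangements of \eqref{Odhad1+} and \eqref{Odhad2+}, (a') is the monotonicity of the integral of a nonnegative integrand under enlargement of the domain, and for (b') you rerun the mean-value/triangle-inequality argument on $I$, the only change being the weaker lower bound $u'(\tau)\geq 2^{k-d-1}\geq 2^{-d-1}u'(x)$ on $I\subset E_{k-d}\cup\cdots\cup E_{k+d}$, which is precisely what the paper means by ``only the lower estimate of $u'$ was used to get (b).'' Your constant tracking in fact yields $2^{d-1}\cdot 32$, which is even slightly better than the stated $2^d\cdot 32$ and hence implies it.
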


Now, we can proceed with the multidimensional case.

\begin{proof}[Proof of Theorem \ref{crux}]
Let $u\in \mathcal{C}_c^2(\er^n)$. We agree on the convention that for $x\in \er^n$, $\bar x=(x_2,x_3,...,x_n)\in \er^{n-1}$ and $x= (x_1,\bar x)$. Without loss of generality, we will prove only the case of $i=1$. Exactly as in the one-dimensional case, we find required covering only for $\{\frac{\partial u}{\partial x_1}>0\}$.

For $k\in \mathbb{Z}$ define
\[
E_k=\left\{2^{k-1}\leq \frac{\partial u}{\partial x_1}<2^k\right\}.
\]
Let $k\in \mathbb{Z}$ and suppose that $(t,\bar x)\in E_k$. We will consider a function of one variable $u(\cdot,\bar x)$, reducing many explanations to the one-dimensional situation. As in the proof of Lemma \ref{sparse1dim} we define 
$$
\begin{aligned}
y_{\bar x}(t)=&\sup\left\{\tau\in \er:(t,\tau)\subset  \left\{2^{k-2}\leq \frac{\partial u}{\partial x_1}(\cdot,\bar x)<2^{k+1}\right\}\right\},\\
z_{\bar x}(t)=&\inf\left\{\tau\in \er:(\tau,t)\subset \left\{2^{k-2}\leq \frac{\partial u}{\partial x_1}(\cdot,\bar x)<2^{k+1}\right\}\right\}.
\end{aligned}
$$

For arbitrary $k\in\mathbb{Z}$, thanks to the uniform continuity of $u$ and its derivatives up to the second order, there is $\delta_k>0$ such that $|x-y|\leq \delta_k$ implies 
 \begin{equation}\label{modul}
   \max\left\{ \left|u(x)-u(y)\right|, \left|\frac{\partial u}{\partial x_1}(x)-\frac{\partial u}{\partial x_1}(y)\right|,\left|\frac{\partial^2 u}{\partial x_1^2}(x)-\frac{\partial^2 u}{\partial x_1^2}(y)\right| \right\}\leq \min\left\{2^{k-4}, \frac{2^{2k-4}}{M}\right\},
\end{equation}
where 
$$
M=\max\left\{\left\|u\right\|_\infty,\left\|\frac{\partial u}{\partial x_1}\right\|_{\infty},\left\|\frac{\partial^2 u}{\partial x^2_1}\right\|_{\infty}\right\}.
$$ 
The choice of $\delta_k$ will be justified by its multiple applications on estimates in the following part.

We define 
 \begin{equation}\label{defRk}
R_k=\bigcup_{(t,\bar x)\in E_k}(z_{\bar x}(t),y_{\bar x}(t))\times B(\bar x,\delta_k),
\end{equation}
where $B(\bar x,\delta_k)$ means the ball in $\er^{n-1}$. We will prove that $\{R_k\}_{k\in \mathbb{Z}}$ satisfies requirements of the theorem over $\{\frac{\partial u}{\partial x_1}>0\}$. First of all, notice that for each $k\in \mathbb{Z}$ it holds
\begin{equation}\label{inc}
R_k\subset E_{k-2}\cup E_{k-1}\cup E_{k}\cup E_{k+1}\cup E_{k+2}.
\end{equation}
Indeed, for arbitrary $y=(y_1,\bar y)\in R_k$ there exists $x=(y_1,\bar{x})\in E_k$ such that $y\in(z_{\bar{x}}(y_1),y_{\bar{x}}(y_1))\times B(\bar x, \delta_k)$. In particular, $|x-y|<\delta_k$ and hence, by the choice of $\delta_k$, we conclude
$$
\left|\frac{\partial u}{\partial x_1}(y)\right|\geq \left|\frac{\partial u}{\partial x_1}(x)\right|-\left|\frac{\partial u}{\partial x_1}(x)-\frac{\partial u}{\partial x_1}(y)\right|\geq 2^{k-2}-2^{k-4}\geq 2^{k-3}, 
$$
similarly, it holds
$$
\left|\frac{\partial u}{\partial x_1}(y)\right|\leq  \left|\frac{\partial u}{\partial x_1}(x)\right|+\left|\frac{\partial u}{\partial x_1}(x)-\frac{\partial u}{\partial x_1}(y)\right|< 2^{k+1}+2^{k-4}<2^{k+2}.
$$
In particular, inclusion \eqref{inc} implies that 
\begin{equation}\label{remmm}
\sum_{k\in \mathbb{Z}}\chi_{R_k}\leq 5.
\end{equation}
It remains to show that for each $k\in \mathbb{Z}$  and each $(t,\bar x)\in E_k$ (actually, also for each  $(t,\bar x)\in R_k$ but with doubled constant) there holds 
$$
\left(\frac{\partial u}{\partial x_1}(t,\bar x)\right)^2\leq 
C \fint_{R_k}\left|\frac{\partial^2 u}{\partial x^2_1}(x)\right|dx \fint_{R_k}|u(x)|dx,
$$
with some constant $C$ independent of $u$ and of $k$. As we control the size of the left-hand side (over $E_k$) up to constant, it is enough to prove the estimate with $2^{2k}$ replacing the left-hand side.

We define
\[
\overline{R}_k:=\{\bar x\in \er^{n-1}: (t,\bar x)\in R_k {\rm \ for\ some\ } t\in \er \},
\]
\[
\widehat{R_k}(\bar x):=\{t\in \er: (t,\bar x)\in R_k\}.
\]
Notice firstly that each $R_k$ is open and bounded set, thus for each $\bar x\in \overline{R}_k$, the set $\widehat{R_k}(\bar x)$ is open and bounded subset of $\er$. It means, for each $\bar x\in \overline{R}_k$ there is at most countable family of disjoint open intervals $(I^k_j(\bar x))$ such that 
\[
\widehat{R_k}(\bar x)=\bigcup_jI^k_j(\bar x). 
\]
With this notation, by the Fubini theorem and the Cauchy--Schwartz inequality applied twice, we have the following estimates
$$
\begin{aligned}
&\left(\fint_{R_k}\left|\frac{\partial^2 u}{\partial x^2_1}(x)\right|dx \fint_{R_k}|u(x)|dx\right)^{\frac{1}{2}}\\
&=
\frac{1}{|R_k|}\left(\int_{\overline{R}_k}\int_{\widehat{R_k}(\bar x)}\left|\frac{\partial^2 u}{\partial x^2_1}(t,\bar x)\right|dt d\bar x  \int_{\overline{R}_k}\int_{\widehat{R_k}(\bar x)}|u(t,\bar x)|dt d\bar x \right)^{\frac{1}{2}}
\\
&\geq \frac{1}{|R_k|}\int_{\overline{R}_k} \left(\int_{\widehat{R_k}(\bar x)}\left|\frac{\partial^2 u}{\partial x^2_1}(t,\bar x)\right|dt \int_{\widehat{R_k}(\bar x)}|u(t,\bar x)|dt \right)^{\frac{1}{2}}d\bar x 
\\ 
&= \frac{1}{|R_k|}\int_{\overline{R}_k} \left(\sum_j\int_{I^k_j(\bar x)}\left|\frac{\partial^2 u}{\partial x^2_1}(t,\bar x)\right|dt  \sum_j\int_{I^k_j(\bar x)}|u(t,\bar x)|dt \right)^{\frac{1}{2}}d\bar x 
\\
&\geq 
\frac{1}{|R_k|}\int_{\overline{R}_k}\sum_j \left(\int_{I^k_j(\bar x)}\left|\frac{\partial^2 u}{\partial x^2_1}(t,\bar x)\right|dt  \int_{I^k_j(\bar x)}|u(t,\bar x)|dt \right)^{\frac{1}{2}}d\bar x 
\\
&=
\frac{1}{|R_k|}\int_{\overline{R}_k}\sum_j |I^k_j(\bar x)|\left(\fint_{I^k_j(\bar x)}\left|\frac{\partial^2 u}{\partial x^2_1}(t,\bar x)\right|dt  \fint_{I^k_j(\bar x)}|u(t,\bar x)|dt \right)^{\frac{1}{2}}d\bar x 
\end{aligned}
$$
Since 
\[
\frac{1}{|R_k|}\int_{\overline{R}_k} \sum_j|I^k_j(\bar x)|d\bar x
= \frac{1}{|R_k|}\int_{\overline{R}_k} |\widehat{R_k}(\bar x)|d\bar x=1,
\]
we need only to show that 
\[
2^{2k} \leq C\fint_{I^k_j(\bar x)}\left|\frac{\partial^2 u}{\partial x^2_1}(t,\bar x)\right|dt  \fint_{I^k_j(\bar x)}|u(t,\bar x)|dt
\]
for each $\bar x\in \bar{R}_k$ and each $j$.
Observe, however, that by definition of $R_k$, for each $I_j^k(\bar x)$ there is some $(t,\bar y)\in E_k$ such that 
\[
(z_{\bar y}(t),y_{\bar y}(t))\subset I_j^k(\bar x).
\]
Moreover,  $|\bar x - \bar y|<\delta_k$, choice of $\delta_k$ \eqref{modul} and inclusion $I_j^k(\bar x)\times \{\bar x\} \subset E_{k-2}\cup... \cup E_{k+2}$ based on \eqref{inc} imply that $I_j^k(\bar x)\times \{\bar y\} \subset E_{k-3}\cup... \cup E_{k+3}$.

It follows now from Observation \ref{after} that
\begin{itemize}
\item[(a)]
\[
2^{k-1}\leq \left|\frac{\partial u}{\partial x_1}(t,\bar y)\right|\leq 4 \int_{I_j^k(\bar x)}|u(s,\bar y)|ds
\]
\item[(b)]
\[
2^{k-1}\leq \left|\frac{\partial u}{\partial x_1}(t,\bar y)\right|\leq \frac{128}{|I_j^k(\bar x)|^2}\int_{I_j^k(\bar x)}\left|\frac{\partial^2 u}{\partial x^2_1}(s,\bar y)\right|ds
\]
\end{itemize}

To finish the proof, it is enough to notice that it holds
\[
\fint_{I^k_j(\bar x)}\left|\frac{\partial^2 u}{\partial x^2_1}(t,\bar y)\right|dt  \fint_{I^k_j(\bar x)}|u(t,\bar y)|dt\leq 2
\fint_{I^k_j(\bar x)}\left|\frac{\partial^2 u}{\partial x^2_1}(t,\bar x)\right|dt  \fint_{I^k_j(\bar x)}|u(t,\bar x)|dt.
\]
This is, however, guaranteed by the choice of $\delta_k$, which implies that 
\[
\begin{aligned}
&\fint_{I^k_j(\bar x)}\left|\frac{\partial^2 u}{\partial x^2_1}(t,\bar y)\right|dt  \fint_{I^k_j(\bar x)}|u(t,\bar y)|dt\\
&\leq 
\fint_{I^k_j(\bar x)}\left|\frac{\partial^2 u}{\partial x^2_1}(t,\bar y)-\frac{\partial^2 u}{\partial x^2_1}(t,\bar x)\right|dt  \fint_{I^k_j(\bar x)}|u(t,\bar x)|dt\\
&+
\fint_{I^k_j(\bar x)}\left|\frac{\partial^2 u}{\partial x^2_1}(t,\bar x)\right|dt  \fint_{I^k_j(\bar x)}|u(t,\bar y)-u(t,\bar x)|dt\\
&+\fint_{I^k_j(\bar x)}\left|\frac{\partial^2 u}{\partial x^2_1}(t,\bar y)-\frac{\partial^2 u}{\partial x^2_1}(t,\bar x)\right|dt  \fint_{I^k_j(\bar x)}|u(t,\bar y)-u(t,\bar x)|dt\\
&+\fint_{I^k_j(\bar x)}\left|\frac{\partial^2 u}{\partial x^2_1}(t,\bar x)\right|dt  \fint_{I^k_j(\bar x)}|u(t,\bar x)|dt\\
&\leq 3\cdot 2^{2k-4}+\fint_{I^k_j(\bar x)}\left|\frac{\partial^2 u}{\partial x^2_1}(t,\bar x)\right|dt  \fint_{I^k_j(\bar x)}|u(t,\bar x)|dt.
\end{aligned}
\]
Of course, the family $(R_k)$ covers only $\{\frac{\partial u}{\partial x_1}>0\}$, but analogously we may construct $(R'_k)$ that will cover $\{\frac{\partial u}{\partial x_1}<0\}$, so the proof is finished.
\end{proof}


\begin{proof}[Proof of Corollary \ref{corcrux}]
Let $u\in \mathcal{C}_c^2(\er^n)$. For each $1\leq i\leq n$ there is family $\mathcal{P}_i$ satisfying \eqref{BodOdhad} and
$$
\sum_{P\in\mathcal{P}_i}\chi_P\leq 5 \chi_{\er^n}.
$$
We define $\mathcal{P}=\bigcup_{i=1}^n \mathcal{P}_i$. Then, we extend the overlap estimate to 
$$
\sum_{P\in\mathcal{P}}\chi_P\leq 5n \chi_{\er^n}.
$$
Moreover, estimate
\[
\begin{aligned}
    \left|\nabla u\right|^2 &\leq n \sum_{i=1}^n\left|\frac{\partial u}{\partial x_i} \right|^2\\
    &\leq
     Cn\sum_{i=1}^n\sum_{P\in\mathcal{P}_i} \left(\fint_P \left|\frac{\partial^2 u}{\partial x_i^2}(t) \right|dt\fint_P |u(t)|dt \right)\chi_P \\
    &= Cn\sum_{P\in\mathcal{P}} \left(\fint_P \left|\frac{\partial^2 u}{\partial x_i^2}(t) \right|dt\fint_P |u(t)|dt \right)\chi_P \\
    &\leq Cn T_{\mathcal{P}} \left|\nabla^2u \right| T_{\mathcal{P}} |u|,
\end{aligned}
\]
thus, the statement holds.
\end{proof}

Let us refine the above theorem by removing the assumption on compact support. While the idea essentially remains the same, we lose the elegance of the statement of Theorem \ref{crux}.

\begin{Lemma}\label{Almpoint}
There exists constant  $C>0$, depending only on dimension $n$, such that for each  $u\in\mathcal{C}^2\cap(L^1+L^\infty)(\er^n)$ and each $1\leq i\leq n$ there exists a sequence of open sets $(G_l)$, satisfying 
$\chi_{G_l}\to \chi_{\textup{supp}\frac{\partial u}{\partial x_i}}$ a.e.,  
and such that for each $l$ there exists a countable family of measurable sets $\mathcal{P}_l$ fulfilling 
    \begin{equation}\label{KI}
    \left|\frac{\partial u}{\partial x_i}\right|^2\chi_{G_l}\leq C\sum_{P\in \mathcal{P}_l}\fint_P\left|\frac{\partial^2 u}{\partial x_i^2}\right|dy\fint_P|u|dy \chi_P
    \end{equation}
    and 
    $$
    \sum_{P\in \mathcal{P}_l} \chi_P \leq 5\chi_{\er^n}.
    $$    
\end{Lemma}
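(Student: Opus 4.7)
The plan is to adapt the proof of Theorem~\ref{crux} by localizing on a bounded open exhaustion of $\supp\frac{\partial u}{\partial x_i}$. Two ingredients of that proof relied crucially on compact support: global uniform continuity of $u, \frac{\partial u}{\partial x_i}, \frac{\partial^2 u}{\partial x_i^2}$ (used to select $\delta_k$ realizing \eqref{modul}), and automatic boundedness of the one-dimensional endpoints $y_{\bar x}(t), z_{\bar x}(t)$. Restricting to a bounded open set restores the first ingredient, while the hypothesis $u \in L^1+L^\infty$ recovers the second via the slicewise argument already used in Lemma~\ref{sparse1dim}.

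Without loss of generality take $i=1$. A natural choice is
\[
G_l := B(0,l) \cap \left\{\left|\tfrac{\partial u}{\partial x_1}\right| > 2^{-l}\right\},
\]
which is bounded and open, and satisfies the required convergence $\chi_{G_l} \to \chi_{\supp\frac{\partial u}{\partial x_1}}$ a.e.\ as $l\to\infty$. On $G_l$ the modulus $\left|\frac{\partial u}{\partial x_1}\right|$ lies in $(2^{-l}, M_l]$, with $M_l := \left\|\frac{\partial u}{\partial x_1}\right\|_{L^\infty(\overline{G_l})} < \infty$, so only finitely many levels $E_k$ meet $G_l$. On the compact set $\overline{G_l}$, uniform continuity of $u$ and its first two partial derivatives in $x_1$ furnishes finitely many $\delta_k^l > 0$ realizing the analogue of \eqref{modul}. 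I would then construct $R_k^l$ as in \eqref{defRk}, declare $\mathcal{P}_l$ to be the union of the resulting finite family with the analogous one built from $\left\{\frac{\partial u}{\partial x_1} < 0\right\}$, and verify \eqref{KI} together with $\sum_{P\in\mathcal{P}_l}\chi_P \leq 5\chi_{\er^n}$ following Theorem~\ref{crux} line for line.

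The main technical obstacle is the finiteness of $y_{\bar x}(t)$ and $z_{\bar x}(t)$ without compact support. To handle it, I would write $u = u_1 + u_2$ with $u_1 \in L^1(\er^n)$ and $u_2 \in L^\infty(\er^n)$; Fubini's theorem then yields $u_1(\cdot,\bar x) \in L^1(\er)$ for a.e.\ $\bar x$, hence $u(\cdot,\bar x) \in L^1+L^\infty(\er)$ for a.e.\ $\bar x$, and the same growth argument as in the proof of Lemma~\ref{sparse1dim} rules out infinite endpoints for those $\bar x$. The exceptional null set is harmless because \eqref{KI} is itself an almost-everywhere statement. All remaining steps---the containment analogous to \eqref{inc}, the overlap bound \eqref{remmm}, and the Cauchy--Schwarz averaging that upgrades slicewise estimates to the $n$-dimensional inequality \eqref{KI}---transfer verbatim from the proof of Theorem~\ref{crux}.
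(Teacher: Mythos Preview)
There is a genuine gap in your localization. You fix $G_l = B(0,l)\cap\{|\tfrac{\partial u}{\partial x_1}|>2^{-l}\}$ and choose $\delta_k^l$ from uniform continuity on the compact set $\overline{G_l}$. But the proof of Theorem~\ref{crux} does not apply the modulus estimate \eqref{modul} only at points of the starting set: it is used at points of $R_k$ (to get \eqref{inc}) and, in the final comparison of integrals over $I_j^k(\bar x)$, at pairs $(t,\bar x),(t,\bar y)$ with $t$ ranging over the whole interval $I_j^k(\bar x)\subset\widehat{R_k}(\bar x)$. If you form $R_k^l$ as in \eqref{defRk} with centers $(t,\bar x)\in E_k\cap G_l$, these sets can protrude far beyond $\overline{G_l}$ in the $x_1$-direction: the slicewise argument from Lemma~\ref{sparse1dim} gives that each interval $(z_{\bar x}(t),y_{\bar x}(t))$ is \emph{finite} for a.e.\ $\bar x$, but its length is controlled by $\|u_1(\cdot,\bar x)\|_{L^1(\er)}$ and $\|u_2\|_{L^\infty}$, and the first of these has no uniform bound over $\bar x\in B_{n-1}(0,l)$. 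Hence there is no compact set, determined by $l$ alone, that contains $R_k^l$, and your $\delta_k^l$ does not deliver \eqref{modul} where it is needed.

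The paper resolves this by reversing the order of the construction. Instead of prescribing $G_l$ in advance, it first restricts to centers whose one-dimensional interval is already trapped in a fixed cube,
\[
S_k^l=\bigl\{(t,\bar x)\in E_k\cap G:(z_{\bar x}(t),y_{\bar x}(t))\subset Q(0,l)\bigr\},
\]
chooses $\delta_k^l$ from uniform continuity on $Q(0,l+1)$, and only then sets $R_k^l=\bigcup_{(t,\bar x)\in S_k^l}(z_{\bar x}(t),y_{\bar x}(t))\times B(\bar x,\delta_k^l)$ and $G_l=\bigcup_{k}R_k^l$. This forces $R_k^l\subset Q(0,l+1)$, so \eqref{modul} is available throughout, and the rest of the argument from Theorem~\ref{crux} goes through verbatim. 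The convergence $\chi_{G_l}\to\chi_{\{\partial u/\partial x_1\neq 0\}}$ is then a separate verification, using that $\bigcup_l S_k^l=G\cap E_k$ because every finite interval eventually fits in some $Q(0,l)$. Your proposed $G_l$ cannot be kept as is: once you restrict the centers to make $R_k^l$ bounded, you no longer cover all of $B(0,l)\cap\{|\tfrac{\partial u}{\partial x_1}|>2^{-l}\}$, so $G_l$ must be redefined from the $R_k^l$ as the paper does.
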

\begin{proof}
    Let $u\in\mathcal{C}^2\cap(L^1+L^\infty)(\er^n)$ and $i=1$. We need to overcome rather technical difficulties caused by possible non-compactness of the support of $u$. The proof is based on the proof of Theorem \ref{crux}; thus, we keep the notation from there.

Observe that it follows from the Fubini theorem and the definition of the space $(L^1+L^{\infty})(\er)$ that for a.e. $\bar x\in \er^{n-1}$, function $u(\cdot, \bar x)$ is in $(L^1+L^{\infty})(\er)$. 
Define 
\[
G:=\er \times \{\bar x:u(\cdot,\bar x) \in (L^1+L^\infty)(\er)\}. 
\]
Thus, $\er^n\backslash G$ is of zero measure. 

Considering $Q(0,l):=[-l,l]^n$, we define 
\[
S_k^l=\{(t,\bar x)\in E_k\cap G:(z_{\bar x}(t),y_{\bar x}(t))\subset Q(0,l)\}.  
\] 
For each $l\in \mathbb{N}$ and each $k\in \mathbb{Z}$, there is $\delta_k^l>0$ such that 
for all $x,y\in Q(0,l+1)$, $|x-y|\leq \delta_k^l$ implies 
$$
   \max\left\{ \left|u(x)-u(y)\right|, \left|\frac{\partial u}{\partial x_1}(x)-\frac{\partial u}{\partial x_1}(y)\right|,\left|\frac{\partial^2 u}{\partial x_1^2}(x)-\frac{\partial^2 u}{\partial x_1^2}(y)\right| \right\}\leq \min\left\{2^{\frac{k}{2}-2},2^{k-4}, \frac{2^{k-4}}{M_l}\right\},
$$
where 
$$M_l=\max\left\{\left\|u\right\|_{L^\infty(Q(0,l+1))},\left\|\frac{\partial u}{\partial x_1}\right\|_{L^\infty(Q(0,l+1))},\left\|\frac{\partial^2 u}{\partial x^2_1}\right\|_{L^\infty(Q(0,l+1))}\right\}.$$
Define 
\begin{equation}\label{Rkl}
R_k^l=\bigcup_{(t,\bar x)\in S_k^l}(z_{\bar x}(t),y_{\bar x}(t))\times B(\bar x,\delta^l_k).
\end{equation}
Explaining exactly as in the proof of Theorem \ref{crux}, we conclude that there is universal constant $C>0$, independent on $k$ and $l$, such that
$$
\left(\frac{\partial u}{\partial x_1}(t,\bar x)\right)^2\leq 
C \fint_{R_k^l}\left|\frac{\partial^2 u}{\partial x^2_1}(x)\right|dx \fint_{R_k^l}|u(x)|dx,
$$ 
holds for each $(t,\bar x)\in R_k^l$ (compare with explanation after \eqref{remmm}).
Finally, we define 
\[
G_l=\bigcup_{k\in \mathbb{Z}}R_k^l{\rm \ and\ } \mathcal{P}_l^+=\bigcup_{k}\{R_k^l\}.
\]
From the proof of Lemma \ref{sparse1dim}(check the argumentation on finitness of interval defined in \eqref{Intervaly}) it follows that 
\[
\bigcup_{l\in \mathbb{N}}S^k_l=G\cap E_k.
\]
In consequence, since by respective definitions $S_k^l\subset R_k^l$, we conclude 
\[
G\cap E_k\subset \bigcup_{l\in \mathbb{N}}R^k_l.
\]
Moreover, for each $k\in \mathbb{Z}$ and each $l\in \mathbb{N}$, $S_k^l\subset S_k^{l+1}$. Notice that unlike $(S_k^{l})_l$, the sequence $(R_k^{l})_l$ need not to be increasing, because $\delta_k^l$ depends on $l$.  Anyhow, monotonicity of $(S_k^{l})_l$ forces that $\chi_{G_l}\to \chi_{E_k}$ a.e. with $l\to \infty$. All together it means that $\chi_{G_l}\to \chi_{\{\frac{\partial u}{\partial x_1}>0\}}$ a.e. with $l\to \infty$.  

\end{proof}


\section{Gagliardo--Nirenberg inequality}\label{GNsection}
In the following, we use Banach function spaces in the sense of  \cite[Definition 1.1, pg. 2]{BS} over $\er^n$ with the Lebesgue measure. In particular, all such spaces have the Fatou property, i.e. for each $u\in L^0$ and each sequence $(u_n)\subset X$ satisfying $u_n(x)\to u(x)$ for a.e. $x\in\er^n$ one has
$$
\|u\|_X\le \liminf_{n\to\infty}\|u_n\|_X,
$$
(see \cite[Lemma 1.5, pg. 4]{BS}).
Moreover, all Banach function spaces under consideration are additionally rearrangement invariant (r.i. for short) according to \cite[Definition 4.1, pg. 59]{BS}.

We also need the Calder\'on--Lozanovskii construction (see, for example, \cite{Lo72, Re88}).

\begin{Definition}[Calder\'on--Lozanovskii construction]
Let $X, Y$ be a couple of Banach function spaces defined on $\er^n$. For $\theta\in(0,1)$ the Calder\'on--Lozanovskii space $X^\theta Y^{1-\theta}$ is defined as
$$
X^\theta Y^{1-\theta}:=\{h\in L^0: |h|\leq f^\theta g^{1-\theta} { \rm\ for\ some\ } f\in X, g\in Y\}
$$
and equipped with the norm
$$
\|h\|_{X^\theta Y^{1-\theta}}:=\inf\{\|f\|_X^{\theta}\|g\|_Y^{1-\theta}: |h|\leq f^\theta g^{1-\theta}\}.
$$
\end{Definition}
A simple consequence of the above definition, which will be later used without mentioning, is that
$$
\|fg\|_{X^\theta Y^{1-\theta}}\leq \|f\|_X^\theta \|g\|_Y^{1-\theta}.
$$
For arbitrary $\theta\in(0,1)$ and r.i. Banach function spaces $X,Y$, also  $X^\theta Y^{1-\theta}$ is an r.i. Banach function space.

The following two lemmas are straightforward and known; we formulate and prove them for the sake of convenience.

\begin{Lemma}\label{lemCM}
    Let $X$ be r.i. Banach function space on $\mathbb{R}^n$ and let $\mathcal{P}$ be countable family of finite measure sets in $\er^n$, such that 
    \begin{equation}\label{Kprekriv}
      \sum_{P\in\mathcal{P}}\chi_P\leq K\chi_{\mathbb{R}^n}, 
    \end{equation}
    for some constant $K$. Then it holds
    \[
    \|T_{\mathcal{P}}\|_{X\to X}\leq K.
    \]
\end{Lemma}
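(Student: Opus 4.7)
The plan is to reduce the claim to the classical endpoint-interpolation principle for r.i.\ Banach function spaces. Concretely, I would first verify that $T_{\mathcal{P}}$ is bounded by $K$ on the two endpoint Lebesgue spaces $L^{1}(\er^{n})$ and $L^{\infty}(\er^{n})$, and then invoke a Calder\'on-type theorem to pass to an arbitrary r.i.\ Banach function space $X$. Before starting, one notes that $T_{\mathcal{P}}$ is a positive linear operator, so $|T_{\mathcal{P}}u|\leq T_{\mathcal{P}}|u|$, and it therefore suffices to argue for $u\geq 0$.

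The $L^{\infty}$ estimate is immediate: inside each summand one has $\fint_{P}|u|\leq \|u\|_{\infty}$, so $|T_{\mathcal{P}}u(x)|\leq \|u\|_{\infty}\sum_{P}\chi_{P}(x)\leq K\|u\|_{\infty}$ by \eqref{Kprekriv}. For the $L^{1}$ estimate, an application of Tonelli's theorem turns the overlap bound into the desired inequality: each summand integrates to $|P|\cdot\fint_{P}|u|=\int_{P}|u|$, and re-summing gives
$$
\|T_{\mathcal{P}}u\|_{1}\leq \int_{\er^{n}}|u(t)|\sum_{P\in\mathcal{P}}\chi_{P}(t)\,dt\leq K\|u\|_{1}.
$$
Both estimates are essentially direct transcriptions of the hypothesis $\sum_{P}\chi_{P}\leq K\chi_{\er^{n}}$.

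The remaining step is the interpolation transfer. Since $T_{\mathcal{P}}$ is bounded by $K$ on both $L^{1}$ and $L^{\infty}$, Calder\'on's theorem (see Bennett--Sharpley, Theorem III.2.12) yields the pointwise rearrangement inequality $(T_{\mathcal{P}}u)^{**}(t)\leq K\,u^{**}(t)$ for all $t>0$. The Hardy--Littlewood--P\'olya principle for r.i.\ Banach function spaces with the Fatou property (Bennett--Sharpley, Theorem II.4.10) then upgrades this to $\|T_{\mathcal{P}}u\|_{X}\leq K\|u\|_{X}$, which is the claim. There is no genuine obstacle here beyond naming the correct classical results; the entire content of the lemma is packaged in verifying the two endpoint inequalities, which follow directly from Tonelli and the overlap bound.
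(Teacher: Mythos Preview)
Your proposal is correct and follows essentially the same route as the paper: verify the $L^{1}$ and $L^{\infty}$ endpoint bounds (which the paper simply calls ``straightforward'') and then invoke interpolation for r.i.\ spaces. The only cosmetic difference is that the paper appeals directly to the Calder\'on--Mitjagin theorem \cite[p.~105, Theorem~2.2]{BS} stating that every r.i.\ Banach function space is an exact interpolation space for $(L^{1},L^{\infty})$, whereas you unpack this into Calder\'on's $(Tu)^{**}\leq K\,u^{**}$ estimate followed by the Hardy--Littlewood--P\'olya principle; these are equivalent formulations of the same fact.
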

\begin{proof}
Recall that each r.i. Banach function space on $\er^n$ is the exact interpolation space for the couple $(L^1(\mathbb{R}^n),L^{\infty}(\mathbb{R}^n)) $ by the Calder\'on--Mitjagin theorem \cite[pg. 105, Theorem 2.2]{BS}. Thus, it is enough to notice that 
  \[
    \|T_{\mathcal{P}}\|_{L^1\to L^1}\leq K {\rm\ and\ } \|T_{\mathcal{P}}\|_{L^{\infty}\to L^{\infty}}\leq K,
  \]
  which is, however, straightforward.
\end{proof}

The next may be regarded as the Young inequality for r.i. Banach function spaces. 

\begin{Lemma}\label{Young}
Let $X$ be a r.i. Banach function space on $\er^n$. Then for each $f\in X$, $\phi\in L^1(\er^n)$ there holds 
\[
\|f*\phi\|_X\leq \|f\|_X\|\phi\|_{L^1}.
\]
\end{Lemma}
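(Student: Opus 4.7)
The plan is to mimic the interpolation strategy of Lemma \ref{lemCM}, applied this time to the convolution operator $T_\phi f:=f*\phi$, or alternatively to exploit translation invariance of the r.i.\ norm together with a Minkowski-type integral inequality. Either route closes the argument.

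For the interpolation route, I first verify the two endpoint bounds for $T_\phi$. Fubini's theorem gives $\|f*\phi\|_{L^1}\leq \|f\|_{L^1}\|\phi\|_{L^1}$ for $f\in L^1$, and the trivial pointwise estimate yields $|(f*\phi)(x)|\leq \|f\|_{L^\infty}\|\phi\|_{L^1}$ a.e.\ for $f\in L^\infty$. Thus $T_\phi$ is bounded on both $L^1$ and $L^\infty$ with operator norm at most $\|\phi\|_{L^1}$. Since every r.i.\ Banach function space on $\er^n$ is an exact interpolation space for the couple $(L^1,L^\infty)$ by the Calder\'on--Mitjagin theorem (invoked already in Lemma \ref{lemCM}), the same bound $\|\phi\|_{L^1}$ transfers to $\|T_\phi\|_{X\to X}$, which is the desired inequality.

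Alternatively, one can argue more directly via translation invariance: for each $y\in\er^n$ the translate $f(\cdot-y)$ has the same distribution function as $f$, so $\|f(\cdot-y)\|_X=\|f\|_X$. Writing
\[
(f*\phi)(x)=\int_{\er^n}\phi(y)\,f(x-y)\,dy
\]
and applying the generalized Minkowski integral inequality, which holds in every Banach function space with the Fatou property, one immediately gets
\[
\|f*\phi\|_X\leq \int_{\er^n}|\phi(y)|\,\|f(\cdot-y)\|_X\,dy=\|\phi\|_{L^1}\|f\|_X.
\]

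The only point requiring attention is the a.e.\ well-definedness of $f*\phi$ for $f\in X$, but this is routine: using the continuous embedding $X\hookrightarrow L^1+L^\infty$ standard for r.i.\ spaces on $\er^n$, one decomposes $f=f_1+f_\infty$, and the endpoint estimates force both $f_1*\phi$ and $f_\infty*\phi$ to be a.e.\ finite. I do not foresee any deeper obstacle in either approach.
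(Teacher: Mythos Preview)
Your interpolation route is exactly the paper's proof: define $T_\phi g=g*\phi$, verify the $L^1$ and $L^\infty$ endpoint bounds from the classical Young inequality, and invoke the Calder\'on--Mitjagin theorem to transfer the operator norm bound $\|\phi\|_{L^1}$ to $X$. The translation-invariance/Minkowski alternative you add is also correct and gives a self-contained argument that bypasses interpolation entirely; the paper does not pursue it.
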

\begin{proof}
    It follows directly from the Young inequality and the Calder\'on--Mitjagin theorem. Let us explain; first, consider operator $T_{\phi}:g\mapsto g*\phi$. Then Young inequality ensures that it is bounded on $L^1$ and on $L^{\infty}$ with both norms dominated (in fact, equal to) by $\|\phi\|_{L^1}$. By the Calder\'on--Mitjagin theorem  \cite[pg. 105, Theorem 2.2]{BS} we know that $X$ is an exact interpolation space for the couple $(L^1,L^{\infty})$, which means that it also holds
    \[
    \|T_{\phi}\|_{X\to X}\leq \|\phi\|_{L^1}. 
    \]
\end{proof}

Let now to the end of the paper $(\phi_{l})$ be the standard mollifying kernel, i.e. $\phi_l(x)=\phi(x/l)$ for some $0\leq \phi\in C^{\infty}_0$ such that $\|\phi\|_{L^1}=1$ and  $\supp \phi \subset B(0,1)$.

The main result will be made by induction based on the following particular case.

\begin{Lemma}\label{mainspec}
Let $X, Y$ be r.i. Banach function spaces over $\mathbb{R}^n$ and let $Z:=X^{1/2}Y^{1/2}$ be  the Calder\'on--Lozanovskii space. Then there are two positive constants $C_{a}$ and $C_{b}$ such that  for each $u\in W^{2,1}_{\loc}(\er^n)$ and each $1\leq i\leq n$ the following inequalities hold
\begin{equation}\label{GNrifin1a}
\left\|\frac{\partial u}{\partial x_i}\right\|_{Z}^2\leq C_{a}\left\|\frac{\partial^2 u}{\partial x_i^2} \right\|_X\|u\|_Y
\end{equation}
and
\begin{equation}\label{GNrifin1b}
\|\nabla u\|_{Z}^2\leq C_{b}\|\nabla^2 u\|_X\|u\|_Y.
\end{equation}
\end{Lemma}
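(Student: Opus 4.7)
The plan is to push the pointwise sparse estimate of Lemma \ref{Almpoint} through to the $Z$-norm, using that each sparse operator $T_{\mathcal{P}}$ is bounded on every r.i.\ Banach function space (Lemma \ref{lemCM}) and that the Calder\'on--Lozanovskii construction provides a H\"older-type inequality for products. I would first prove \eqref{GNrifin1a} for each $i$, and then obtain \eqref{GNrifin1b} by summing over $i$ together with the elementary bound $|\partial^2 u/\partial x_i^2|\le |\nabla^2 u|$.

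Assume the right-hand side of \eqref{GNrifin1a} is finite; then $u\in Y\subset L^1+L^\infty$ and $\partial^2 u/\partial x_i^2\in X\subset L^1+L^\infty$. The mollifications $u_m:=u*\phi_m$ lie in $\mathcal{C}^2\cap(L^1+L^\infty)(\er^n)$, commute with derivatives up to order $2$, and by Lemma \ref{Young} satisfy $\|u_m\|_Y\le \|u\|_Y$ and $\|\partial^2 u_m/\partial x_i^2\|_X\le \|\partial^2 u/\partial x_i^2\|_X$ (once $\phi_m$ is normalised in $L^1$); moreover $\partial u_m/\partial x_i\to \partial u/\partial x_i$ a.e.\ by standard properties of mollifiers. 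Fix such a smooth $v:=u_m$. Lemma \ref{Almpoint} gives, for every $l$, a family $\mathcal{P}_l$ with overlap at most $5$ and an open set $G_l$ with $\chi_{G_l}\to\chi_{\supp\partial v/\partial x_i}$ a.e., such that
\[
\Bigl|\tfrac{\partial v}{\partial x_i}\Bigr|^2\chi_{G_l}\le C\sum_{P\in \mathcal{P}_l}\Bigl(\fint_P\Bigl|\tfrac{\partial^2 v}{\partial x_i^2}\Bigr|\Bigr)\Bigl(\fint_P|v|\Bigr)\chi_P.
\]

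The key algebraic observation is that for any non-negative coefficients $a_P,b_P$ one has $\sum_P a_Pb_P\chi_P\le \bigl(\sum_P a_P\chi_P\bigr)\bigl(\sum_P b_P\chi_P\bigr)$. Hence the right-hand side above is dominated pointwise by $C\,T_{\mathcal{P}_l}|\partial^2 v/\partial x_i^2|\cdot T_{\mathcal{P}_l}|v|$; taking square roots and invoking the defining inequality of the Calder\'on--Lozanovskii norm with $\theta=1/2$ and then Lemma \ref{lemCM}, I would obtain
\[
\bigl\||\tfrac{\partial v}{\partial x_i}|\chi_{G_l}\bigr\|_Z\le C^{1/2}\|T_{\mathcal{P}_l}|\partial^2 v/\partial x_i^2|\|_X^{1/2}\|T_{\mathcal{P}_l}|v|\|_Y^{1/2}\le 5C^{1/2}\Bigl\|\tfrac{\partial^2 v}{\partial x_i^2}\Bigr\|_X^{1/2}\|v\|_Y^{1/2}.
\]
Letting $l\to\infty$ and using the Fatou property of $Z$ yields \eqref{GNrifin1a} for $v=u_m$ with constant $25C$. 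A second Fatou pass as $m\to\infty$, together with the monotonicity of the mollified norms derived above, transfers the estimate to $u\in W^{2,1}_{\loc}(\er^n)$.

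Finally, \eqref{GNrifin1b} follows from \eqref{GNrifin1a}: the triangle inequality in $Z$ applied to $|\nabla u|\le \sum_i|\partial u/\partial x_i|$, combined with $\|\partial^2 u/\partial x_i^2\|_X\le \|\nabla^2 u\|_X$, gives the gradient version with $C_b$ of the form $25n^2 C_a$. I do not expect any serious obstacle here: Lemma \ref{Almpoint} already encapsulates all the analytic content, and the lemma above is essentially a packaging result. The only mild care needed is the orderly separation of the two successive Fatou limits---the inner one removing $\chi_{G_l}$ and the outer one removing the mollification.
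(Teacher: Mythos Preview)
Your proposal is correct and follows essentially the same route as the paper's own proof: mollify, apply Lemma \ref{Almpoint} to each $u_m$, pass from the sparse sum to $T_{\mathcal{P}_l}|\partial^2 v/\partial x_i^2|\cdot T_{\mathcal{P}_l}|v|$, use the Calder\'on--Lozanovskii H\"older inequality together with Lemma \ref{lemCM}, and conclude via two nested Fatou limits (first $l\to\infty$, then $m\to\infty$) and Lemma \ref{Young}. You are in fact slightly more explicit than the paper in isolating the elementary inequality $\sum_P a_Pb_P\chi_P\le\bigl(\sum_P a_P\chi_P\bigr)\bigl(\sum_P b_P\chi_P\bigr)$ and in flagging the order of the two Fatou passes; the derivation of \eqref{GNrifin1b} from \eqref{GNrifin1a} is likewise identical in spirit.
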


\begin{Remark}
Notice that if $u\in \mathcal{C}_c^2(\mathbb{R}^n)$, then by  Corollary \ref{corcrux} and Lemma \ref{lemCM} we have directly 
$$
\begin{aligned}
\|\nabla u\|_{X^{\frac{1}{2}}Y^{\frac{1}{2}}}&\leq C\|\left(T_{\mathcal{P}}|\nabla^2 u|\right)^{\frac{1}{2}}\left(T_{\mathcal{P}}|u|\right)^{\frac{1}{2}}\|_{X^{\frac{1}{2}}Y^{\frac{1}{2}}}\\     
&\leq C\|T_{\mathcal{P}}|\nabla^2 u|\|_X^{\frac{1}{2}}\|T_{\mathcal{P}}|u|\|_Y^{\frac{1}{2}}\\
&\leq C\|\nabla^2 u\|_X^{\frac{1}{2}}\|u\|_Y^{\frac{1}{2}},
\end{aligned}
$$
as required, while $C$ is constant, differing in steps but depending only on dimension. This argument would be enough to prove Lemma \ref{mainspec}  also if spaces $X, Y$ are separable. However, we do not assume the separability of considered spaces. In consequence, the Fatou property has to replace separability, but then the proof complicates slightly, and we need Lemma \ref{Almpoint} rather than Theorem \ref{crux}. 
\end{Remark}

\begin{proof}[Proof of Lemma \ref{mainspec}]
We start with the proof of \eqref{GNrifin1a}. Let $u\in W^2(L^1+L^{\infty})(\er^n)$ and $\phi_l$ be a mollifier  defined as above. We put  
$$
u_l:=u*\phi_l,\ l\in \mathbb{N}. 
$$
Then, each $u_l$ satisfies assumptions of Lemma \ref{Almpoint}. Applying it for arbitrary fixed $1\leq i\leq n$ and  $l$  we get family $(G_k)$ satisfying 
   \[
    \left|\frac{\partial u_l}{\partial x_i} \right|^2\chi_{G_k}\leq C \sum_{P\in \mathcal{P}_k} \fint_{P}|u_l(z)|dz \fint_{P}\left|\frac{\partial^2 u_l}{\partial x_i^2}(z)\right|dz \chi_P,
  \]
and 
 \[
        \sum_{P\in \mathcal{P}_k}\chi_P\leq 5,
 \] where $C$ is constant depending only on dimension.
Thus, by Lemma \ref{lemCM}  and the Fatou property
\[
\left\|\frac{\partial u_l}{\partial x_i}\right\|^2_{Z}\leq \liminf_{k\to \infty} \left\|\frac{\partial u_l}{\partial x_i}\chi_{G_k}\right\|^2_{Z}\leq 
C \left\|\left(T_{\mathcal{P}_k}\left|\frac{\partial^2 u_l}{\partial x_i^2}\right|\ T_{\mathcal{P}_k}\left|u\right|\right)^{\frac{1}{2}}\right\|_Z
\]
\[
\leq C \left\|\frac{\partial^2 u_l}{\partial x_i^2}\right\|_X^{\frac{1}{2}}\left\|u_l\right\|_Y^{\frac{1}{2}}.
\]

Then, by Lemma \ref{Young}  
\[
\left\|\frac{\partial^2 u_l}{\partial x_i^2}\right\|_X^{\frac{1}{2}}\left\|u_l\right\|_Y^{\frac{1}{2}}\leq 
\left\|\phi_l\right\|_{L^1}\left\|\frac{\partial^2 u}{\partial x_i^2}\right\|_X^{\frac{1}{2}}\|u\|_Y^{\frac{1}{2}}  =
\left\|\frac{\partial^2 u}{\partial x_i^2}\right\|_X^{\frac{1}{2}}\|u\|_Y^{\frac{1}{2}}.
\]
Using once again the Fatou property, we conclude that
\[
\left\|\frac{\partial u}{\partial x_i}\right\|_{Z}\leq \liminf_{l\to \infty} \left\|\frac{\partial u_l}{\partial x_i}\right\|_{Z}\leq C\left\|\frac{\partial^2 u}{\partial x_i^2}\right\|_X^{\frac{1}{2}}\left\|u\right\|_Y^{\frac{1}{2}}.
\]

Having \eqref{GNrifin1a}, the inequality \eqref{GNrifin1b} is its easy consequence with appropriately enlarged constant according to \eqref{nabla}. 
\end{proof}

\begin{proof}[Proof of Theorem \ref{MT}] We will prove only \eqref{GNri}, since the proof of \eqref{pureGNri} goes essentially the same lines. 

Let us denote by $GN(j,k)$ the statement that \eqref{GNri} holds for some $1\leq j<k$ and for all r.i. Banach function spaces  $X, Y$. By Lemma \ref{mainspec}, we know that $GN(1,2)$ is true. We will proceed by induction according to the following scheme:
\begin{itemize}
\item[(A)] $GN(1,k-1) \wedge GN(k-2,k-1) \Rightarrow GN(1,k)$
\item[(B)] (in case $1<j<k-1$)  $GN(j-1,k-1) \wedge GN(1,k) \Rightarrow GN(j,k)$
\end{itemize}

We start with the proof of (A)-step. Let $X$ and $Y$ be r.i. Banach function spaces. By $GN(k-2,k-1)$ applied to $\nabla u$ and respective spaces we have 
\[
\|\nabla^{k-1} u\|_{X^{\frac{k-1}{k}}Y^{\frac{1}{k}}} \leq C \|\nabla^k u\|^{\frac{k-2}{k-1}}_X 
\|\nabla u\|^{\frac{1}{k-1}}_{X^{\frac{1}{k}}Y^{\frac{k-1}{k}}},
\]
since 
\[
X^{\frac{k-1}{k}}Y^{\frac{1}{k}}=X^{\frac{k-2}{k-1}}\left(X^{\frac{1}{k}}Y^{\frac{k-1}{k}}\right)^{\frac{1}{k-1}}.
\]
On the other hand, $GN(1,k-1)$ gives 
\[
\|\nabla u\|_{X^{\frac{1}{k}}Y^{\frac{k-1}{k}}} \leq C \|\nabla^{k-1} u\|^{\frac{1}{k-1}}_{X^{\frac{k-1}{k}}Y^{\frac{1}{k}}} 
\| u\|^{\frac{k-2}{k-1}}_{Y}.
\]
Thus we have
\[
\|\nabla u\|_{X^{\frac{1}{k}}Y^{\frac{k-1}{k}}} \leq C \left(\|\nabla^k u\|^{\frac{k-2}{k-1}}_X 
\|\nabla u\|^{\frac{1}{k-1}}_{X^{\frac{1}{k}}Y^{\frac{k-1}{k}}} \right)^{\frac{1}{k-1}}
\| u\|^{\frac{k-2}{k-1}}_{Y},
\]
which after evident cancellations gives $GN(1,k)$ for $X$ and $Y$, i.e. 
\[
\|\nabla u\|_{X^{\frac{1}{k}}Y^{\frac{k-1}{k}}}\leq C \|\nabla^k u\|^{\frac{1}{k}}_X\| u\|^{\frac{k-1}{k}}_{Y}.
\]

Now we can prove step (B). As before, let $X$ and $Y$ be r.i. Banach function spaces and $1<j<k-1$. Firstly, by $GN(1,k)$
we have 
\[
\|\nabla u\|_{X^{\frac{1}{k}}Y^{\frac{k-1}{k}}}\leq C \|\nabla^k u\|^{\frac{1}{k}}_X\| u\|^{\frac{k-1}{k}}_{Y}.
\]
Then  $GN(j-1,k-1)$ applied to $\nabla u$ gives
\[
\|\nabla^{j} u\|_{X^{\frac{j-1}{k-1}}\left( X^{\frac{1}{k}}Y^{\frac{k-1}{k}}\right)^{\frac{k-j}{k-1}}} \leq C \|\nabla^k u\|^{\frac{j-1}{k-1}}_X 
\|\nabla u\|^{\frac{k-j}{k-1}}_{X^{\frac{1}{k}}Y^{\frac{k-1}{k}}}.
\]
Therefore
\[
\|\nabla^{j} u\|_{X^{\frac{j-1}{k-1}}\left( X^{\frac{1}{k}}Y^{\frac{k-1}{k}}\right)^{\frac{k-j}{k-1}}} \leq C
\|\nabla^k u\|^{\frac{j-1}{k-1}}_X \left( \|\nabla^k u\|^{\frac{1}{k}}_X\| u\|^{\frac{k-1}{k}}_{Y}\right)^{\frac{k-j}{k-1}}
=\|\nabla^k u\|^{\frac{j}{k}}_X\| u\|^{\frac{k-j}{k}}_{Y},
\]
since
\[
X^{\frac{j-1}{k-1}}\left( X^{\frac{1}{k}}Y^{\frac{k-1}{k}}\right)^{\frac{k-j}{k-1}}=X^{\frac{j}{k}}Y^{\frac{k-j}{k}},
\]
and the inductive argument is complete.
\end{proof}


\section{Applications to Lorentz and Orlicz spaces}\label{orliczsection}

As we declared, our general considerations also give new results in the case of the most significant examples of r.i. Banach function spaces, i.e. for Lorentz and Orlicz spaces. We use standard definitions, but let us recall them briefly. 

Below $f^*$ stands for the nonincreasing rearrangement of a function $f\in L^0(\er^n)$ (cf. for example \cite{BS}), while 
\[
f^{**}(t)=\frac{1}{t}\int_0^tf^*(s)ds.
\]
Then, for $1\leq P,p\leq \infty$ the {\rm Lorentz space} $L^{P,p}$ is given by the norm
$$
\|f\|_{L^{P,p}}:=\|t^{\frac{1}{P}-\frac{1}{p}}f^{**}\|_{L^p}.
$$
When $P=\infty$ the space $L^{\infty,p}$ is nontrivial only when also $p=\infty$ and then $L^{\infty,\infty}=L^{\infty}$. Similarly, $L^{1,p}$ is Banach space only when $p=1$ and then $L^{1,1}=L^1$. We consider only the Banach spaces case below. 

Based on Theorem \ref{MT}, we conclude Gagliardo--Nirenberg inequality for Lorentz spaces. 
    
\begin{Corollary}
Let $1\leq j< k$ and $1\leq P,p,Q,q\leq \infty$ (with the standard exceptions). Suppose that 
\[
\frac{j}{P}+\frac{k-j}{Q}=\frac{k}{R}\quad \textup{and}\quad \frac{j}{p}+\frac{k-j}{q}=\frac{k}{r}.
\]

  Then, there are positive constants $C_a, C_b$ such that  the following inequalities hold 
  \begin{equation*}
\left\|\frac{\partial^j u}{\partial x^j_i}\right\|_{L^{R,r}}\leq C_a\left\|\frac{\partial^k u}{\partial x^k_i}\right\|^{\frac{j}{k}}_{L^{P,p}}\|u\|^{1-\frac{j}{k}}_{L^{Q,q}}
\end{equation*}
and
\begin{equation*}
\|\nabla^j u\|_{L^{R,r}}\leq C_b\|\nabla^k u\|^{\frac{j}{k}}_{L^{P,p}}\|u\|^{1-\frac{j}{k}}_{L^{Q,q}}
\end{equation*}
for each $u\in W^{k,1}_{\loc}(\er^n)$ and for each $1\leq i\leq n$.
\end{Corollary}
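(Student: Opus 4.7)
The plan is to recognize the stated corollary as a direct specialization of Theorem \ref{MT} once we identify the Calder\'on--Lozanovskii space $(L^{P,p})^{j/k}(L^{Q,q})^{1-j/k}$ with the target Lorentz space $L^{R,r}$. Setting $\theta = j/k$, the hypotheses of the corollary read
\[
\frac{1}{R} = \frac{\theta}{P} + \frac{1-\theta}{Q}, \qquad \frac{1}{r} = \frac{\theta}{p} + \frac{1-\theta}{q},
\]
which is precisely the exponent relation produced by the Lozanovskii construction applied to two Lorentz spaces.

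My first step would be to record (or, if needed, verify) the identity
\[
(L^{P,p})^{\theta}(L^{Q,q})^{1-\theta} = L^{R,r},
\]
with equivalence of norms, for admissible parameters. This is classical (it goes back to Lozanovskii and has been revisited many times in the literature on Calder\'on--Lozanovskii constructions); it can be seen from the definition by taking $f\in L^{P,p}$ and $g\in L^{Q,q}$ and applying H\"older's inequality for Lorentz spaces to $f^{\theta}g^{1-\theta}$, together with the sharpness obtained by decomposing any $h\in L^{R,r}$ as $h = |h|^{R/P}\operatorname{sgn}(h) \cdot |h|^{R/Q}$ after an appropriate measurable splitting.

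Once this identification is in place, I simply invoke Theorem \ref{MT} with $X=L^{P,p}$ and $Y=L^{Q,q}$. Theorem \ref{MT} gives
\[
\left\|\frac{\partial^j u}{\partial x_i^j}\right\|_{Z} \leq C_a \left\|\frac{\partial^k u}{\partial x_i^k}\right\|_{L^{P,p}}^{j/k}\|u\|_{L^{Q,q}}^{1-j/k}
\]
with $Z = (L^{P,p})^{j/k}(L^{Q,q})^{1-j/k}$, and by the identification of the previous paragraph we have $Z = L^{R,r}$ (up to equivalent norms, which only affects the constant). The gradient version \eqref{GNri} follows from Theorem \ref{MT} in exactly the same way.

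The only genuine point requiring care is the admissibility of the chosen parameters: since $L^{\infty,p}$ and $L^{1,p}$ are only Banach spaces in the stated exceptional cases, I would verify that the standing assumption of the corollary excludes these degenerate parameter choices, so that $L^{P,p}$, $L^{Q,q}$, $L^{R,r}$ are all genuine r.i. Banach function spaces and Theorem \ref{MT} applies. This is where I expect the main bookkeeping, but no serious obstacle: the interpolation relation $\frac{j}{P}+\frac{k-j}{Q}=\frac{k}{R}$ with $\theta = j/k\in(0,1)$ forces $R$ to lie strictly between $P$ and $Q$ (in the reciprocal sense), so once $P, Q$ are admissible Banach Lorentz parameters, so is $R$.
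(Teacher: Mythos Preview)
Your proposal is correct and follows essentially the same route as the paper: invoke Theorem~\ref{MT} with $X=L^{P,p}$, $Y=L^{Q,q}$, and identify the Calder\'on--Lozanovskii space $(L^{P,p})^{\theta}(L^{Q,q})^{1-\theta}$ with $L^{R,r}$ via the known representation formula (the paper cites \cite{calderon} for this), then take $\theta=j/k$. Your additional remarks on how one would verify the identification and on parameter admissibility are reasonable elaborations but not a departure from the paper's argument.
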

\begin{proof}
The proof is an immediate consequence of Theorem \ref{MT} once we know the representation formula of the Calder{\'o}n--Lozanovskii construction for Lorentz space (cf. \cite{calderon}). This is, however, well-known and reads as follows
    $$
    (L^{P,p})^{\theta}(L^{Q,q})^{1-\theta}=L^{\theta P+(1-\theta)Q,\theta p+(1-\theta)q},
    $$
    where $1\leq P,p,Q,q\leq \infty$ (with the standard exceptions). The choice $\theta=j/k$ finishes the proof. 
    \end{proof}

Notice that in contrast to \cite[Corollary 2.5]{LeRoSo23} and \cite[Corollary 1.3]{FiFoRoSo2}, the above formula allows one (or two) $L^1$ space on the right, thus is new in these cases.

We also complete the picture of the Gagliardo--Nirenberg inequality for Orlicz spaces. Recall that a continuous, convex, nondecreasing function $\varphi:[0,\infty)\to [0,\infty]$ satisfying $\varphi(0)=0$ is called a {\it Young function}. Such $\varphi$ is called the Orlicz function if we assume additionally that it is bijective. For a given Young function $\varphi$ the Orlicz space $L^{\varphi}$ is defined by
\[
L^{\varphi}=\left\{f\in L^0:\rho_{\varphi}(f):=\int_0^{\infty}\varphi\left(\frac{|f(t)|}{\lambda}\right)dt <\infty {\rm \ for\ some \ }\lambda>0 \right\}
\]
and equipped with the Luxemburg norm
\[
\|f\|_{\varphi}=\inf \left\{\lambda>0 :\rho_{\varphi}(f/\lambda)=\int_0^{\infty}\varphi\left(\frac{|f(t)|}{\lambda}\right)dt \leq 1\right\}.
\]

\begin{Corollary}
Let $1\leq j< k$ and let $\varphi_1,\varphi_2, \psi$ be Orlicz functions such that  
\[
\psi^{-1}= (\varphi_1^{-1})^{\frac{j}{k}}(\varphi_2^{-1})^{1-\frac{j}{k}}.
\]
 Then, there are positive constants $C_a, C_b$ such that  the following inequalities hold 
 $$
\left\|\frac{\partial^j u}{\partial x^j_i}\right\|_{L^{\psi}}\leq C_a\left\|\frac{\partial^k u}{\partial x^k_i}\right\|^{\frac{j}{k}}_{L^{\varphi_1}}\|u\|^{1-\frac{j}{k}}_{L^{\varphi_2}}
$$
and
$$
\|\nabla^j u\|_{L^{\psi}}\leq C_b\|\nabla^k u\|^{\frac{j}{k}}_{L^{\varphi_1}}\|u\|^{1-\frac{j}{k}}_{L^{\varphi_2}}
$$
for each $u\in W^{k,1}_{\loc}(\er^n)$ and for each $1\leq i\leq n$.
\end{Corollary}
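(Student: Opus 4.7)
The plan is to reduce the statement directly to Theorem \ref{MT} by identifying the Calder\'on--Lozanovskii space $(L^{\varphi_1})^{j/k}(L^{\varphi_2})^{1-j/k}$ with the Orlicz space $L^{\psi}$. Setting $X=L^{\varphi_1}$, $Y=L^{\varphi_2}$, and $\theta=j/k$, Theorem \ref{MT} delivers the inequalities
\[
\left\|\frac{\partial^j u}{\partial x^j_i}\right\|_{Z}\leq C_a\left\|\frac{\partial^k u}{\partial x^k_i}\right\|^{\frac{j}{k}}_{L^{\varphi_1}}\|u\|^{1-\frac{j}{k}}_{L^{\varphi_2}}\quad\text{and}\quad \|\nabla^j u\|_Z\leq C_b\|\nabla^k u\|^{\frac{j}{k}}_{L^{\varphi_1}}\|u\|^{1-\frac{j}{k}}_{L^{\varphi_2}},
\]
with $Z=(L^{\varphi_1})^{\theta}(L^{\varphi_2})^{1-\theta}$. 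Hence everything reduces to verifying that $Z=L^{\psi}$ (up to equivalent norms) precisely under the hypothesis $\psi^{-1}=(\varphi_1^{-1})^{j/k}(\varphi_2^{-1})^{1-j/k}$.

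For this identification I would invoke the classical representation formula of Lozanovskii (see \cite{Lo72, Re88}), which asserts that for any two Orlicz functions $\varphi_1,\varphi_2$ and any $\theta\in(0,1)$, the Calder\'on--Lozanovskii construction applied to $L^{\varphi_1}$ and $L^{\varphi_2}$ yields an Orlicz space $L^{\psi}$, whose defining Young function satisfies $\psi^{-1}=(\varphi_1^{-1})^{\theta}(\varphi_2^{-1})^{1-\theta}$ on $[0,\infty)$. This is standard and needs no further argument here; the key computational check underlying it is that if $|h|\le f^{\theta}g^{1-\theta}$ with $f\in L^{\varphi_1}$, $g\in L^{\varphi_2}$, then the inequality $\psi(|h|)\le \theta \varphi_1(f)+(1-\theta)\varphi_2(g)$ (an easy consequence of convexity applied to the inverse functions) allows one to estimate the Luxemburg norm of $h$ in $L^{\psi}$ by the product $\|f\|_{L^{\varphi_1}}^{\theta}\|g\|_{L^{\varphi_2}}^{1-\theta}$; the reverse inclusion is obtained by choosing $f,g$ pointwise as appropriate inverse compositions.

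Once the equality $Z=L^{\psi}$ is in place, the two asserted Gagliardo--Nirenberg inequalities follow by simply rewriting the norm on the left-hand side of the conclusion of Theorem \ref{MT}. I do not expect any genuine obstacle: the main technical content is already contained in Theorem \ref{MT}, and the Orlicz identification is a purely function-space-theoretic fact. The only point to be mindful of is that the Orlicz space of the product in the Lozanovskii formula need not be normalized so that $\psi$ is a Young function in the strict sense (e.g.\ bijectivity at $0$ and $\infty$); but the assumption that each $\varphi_i$ is an Orlicz function (hence bijective) guarantees that $\psi$ defined by the stated inverse formula is again a bona fide Orlicz function, so no degenerate cases arise.
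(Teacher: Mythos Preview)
Your proposal is correct and follows exactly the same route as the paper: apply Theorem \ref{MT} with $X=L^{\varphi_1}$, $Y=L^{\varphi_2}$, $\theta=j/k$, and then invoke the known identification $(L^{\varphi_1})^{\theta}(L^{\varphi_2})^{1-\theta}=L^{\psi}$ for $\psi^{-1}=(\varphi_1^{-1})^{\theta}(\varphi_2^{-1})^{1-\theta}$. The only difference is bibliographic---the paper cites Maligranda \cite[p.~179]{Mal89} for the Orlicz Calder\'on--Lozanovskii formula rather than \cite{Lo72, Re88}---and your extra paragraph sketching why the identification holds is not needed but does no harm.
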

\begin{proof}
   As previously, we need only to identify the Calder\'{o}n--Lozanovskii space. From  \cite[p. 179]{Mal89} it follows that 
    $$
    (L^{\varphi_1})^{\theta}(L^{\varphi_2})^{1-\theta}=L^{\psi},
    $$
    i.e. Calderón--Lozanovskii space of two Orlicz spaces is once again Orlicz space defined by the Orlicz function $\psi$ determined by the equation
    $$\psi^{-1}= (\varphi_1^{-1})^{\theta}(\varphi_2^{-1})^{1-\theta}.$$
    Thus, the claim follows when $\theta=j/k$. 
\end{proof}

Once again, the above corollary removes unnecessary assumptions from previously known Gagliardo--Nirenberg inequalities for Orlicz spaces (cf. \cite[Corollary 1.4]{FiFoRoSo2} and \cite{KalPP, Kal}).

Let us finish the discussion on Orlicz spaces with some observations that may be of interest. Namely, our method applies without using the interpolation theory in the Orlicz spaces setting, thus also for Lebesgue spaces. It is because Lemma \ref{lemCM} follows directly from Jensen inequality instead of Calder\'{o}n--Mitjagin theorem. In this version, we get even more, i.e. modular inequality, which is generally stronger than the norm inequality. 

\begin{Remark}
Indeed, let $K$ be the constant controlling the number of overlaps in the family of sets $\mathcal{P}$ and let $\varphi$ be an arbitrary Young function. Let $M$ be the union of all sets from the family $\mathcal{P}$. Then, for $T_{\mathcal{P}}$ defined as in \eqref{Oper} we get 
$$
\begin{aligned}
    \rho_{\varphi}\left(\frac{T_{\mathcal{P}}u}{K}\right)&= 
    \int_{\er^n}\varphi\left(\sum_{P\in\mathcal{P}}\frac{\chi_{P}(x)}{K}\fint_P|u(y)|dy\right)dx\\
    &\leq \int_{M}\varphi\left(\sum_{P\in\mathcal{P}}\frac{\chi_{P}(x)}{\sum_{A\in \mathcal{P}}\chi_A(x)}\fint_P|u(y)|dy\right)dx\\
    &\leq \int_{M}\sum_{P\in\mathcal{P}}\frac{\chi_{P}(x)}{\sum_{A\in \mathcal{P}}\chi_A(x)}\varphi\left(\fint_P|u(y)|dy\right)dx\\  &\leq\int_{M}\sum_{P\in\mathcal{P}}\frac{\chi_{P}(x)}{\sum_{A\in \mathcal{P}}\chi_A(x)}\fint_P\varphi(|u(y)|)dy dx\\
    &\leq \int_{M}\varphi(|u(x)|)dx\\
    &\leq \rho_\varphi(u).
\end{aligned}
$$
In particular, it implies the norm inequality, i.e. 
$$
\|T_\mathcal{P} u\|_{L^\varphi}\leq K\|u\|_{L^\varphi}.
$$
\end{Remark}


\section{Summary and  comments}\label{questsection}

\begin{Remark}
In the beginning, we wish to explain an unusual form of the Gagliardo--Nirenberg inequality \eqref{pureGNri}, i.e. 
\begin{equation}\label{pureGNrisummary}
\left\|\frac{\partial^j u}{\partial x^j_i}\right\|_{Z}\leq C\left\|\frac{\partial^k u}{\partial x^k_i}\\\right\|^{\frac{j}{k}}_X\|u\|^{1-\frac{j}{k}}_Y,
\end{equation}
and why we decided to distinguish it from \eqref{GNri}. Evidently, \eqref{pureGNrisummary} implies \eqref{GNri}. On the other hand, observe that for Lebesgue spaces, one-dimensional inequality \eqref{GNri} together with the Fubini theorem and the H\"older inequality imply \eqref{pureGNrisummary}. In fact, we have 

$$
\begin{aligned}
\left\|\frac{\partial^j u}{\partial x^j_1}\right\|_{L^p}&= \left(\int_{\er^{n-1}}\left(\int_{\er}\left|\frac{\partial^j u}{\partial x^j_1}\right|^p dx_1\right)d\bar x\right)^{\frac{1}{p}}\\
&\leq C \left(\int_{\er^{n-1}}\left(\int_{\er}\left|\frac{\partial^k u}{\partial x^k_1}\right|^qdx_1\right)^{\frac{jp}{kq}}\left(\int_{\er}\left|u\right|^rdx_1\right)^{\frac{(k-j)p}{kr}}d\bar x\right)^{\frac{1}{p}}\\
&\leq C \left(\int_{\er^n}\left|\frac{\partial^k u}{\partial x^k_1}\right|^qdx\right)^{\frac{j}{kq}}\left(\int_{\er^n}\left|u\right|^rdx\right)^{\frac{(k-j)}{kr}}d\bar x\\
&=C \left\|\frac{\partial^k u}{\partial x^k_1} \right\|_{L^q}^{\frac{j}{k}}\left\|u \right\|_{L^r}^{1-\frac{j}{k}},
\end{aligned}
$$
where $\frac{1}{p}=\frac{j}{kq}+\frac{k-j}{kr}$.
The same argument, however, cannot be applied to another r.i. Banach function spaces because there is no analogue of the Fubini theorem, even for Orlicz or Lorentz spaces (cf. \cite{BBS02} and references therein). It means there is no easy argument why \eqref{GNri} should imply \eqref{pureGNri} for general r.i. Banach function spaces. Thus, \eqref{pureGNri} seems stronger than \eqref{GNri} in the setting of r.i. Banach function spaces.

The situation is similar for pointwise inequalities. Namely, it does not follow from the proof of 
Ka{\l}amajska--Mazya--Schaposhnikova inequality, that $\nabla$ operator can be replaced therein by partial derivative $\frac{\partial u}{\partial x_i}$, for fixed $i$. Thus, also \eqref{BodOdhad} seems to have no "maximal operator" counterpart, up to our knowledge. 

\end{Remark}

\begin{Remark}
 It is also worth emphasizing that in the statement of Lemma \ref{mainspec} (analogously for Theorem \ref{MT}), the full second gradient on the left is superfluous. Namely, there holds seemingly stronger inequality without mixed derivatives on the right
\begin{equation}\label{GNrifin1comm}
\|\nabla u\|_{Z}^2\leq C \left\|\left(\frac{\partial^2u}{\partial x_i^{2}}\right)_{i=1,...,n}\right\|_X\|u\|_Y.
\end{equation}
Of course, by the classical tools of harmonic analysis, we know that norms of mixed derivatives are dominated by the norm of all pure derivatives, but only in r.i. Banach function spaces with nontrivial upper Boyd index. At the same time, the Ornstein "noninequality" says that the same is impossible in $L^1$ \cite{Orn62, KSW17}. Thus, \eqref{GNrifin1comm} is perhaps stronger than \eqref{GNri} in the case when $X$ has the upper Boyd index equal to one. Notice that for $X=L^1$, it may also be derived from the original proof of Gagliardo and Nirenberg (cf. \cite{FIFOROSO}).   
\end{Remark}

\begin{Remark}
We are also obliged to explain why we use the term sparse domination, although our sparse families are somehow different from those in the classical sparse domination theory. In fact, for example, in \cite{Hy12, Le13a, Le13b, Le16, LeOm20, LeLo22, LeRoSo23},   a family $\mathcal{S}$ of (often dyadic) cubes is said to be sparse if there is some universal constant $0<\eta<1$ such that
\begin{enumerate}[(a)]
    \item for each $Q\in \mathcal{S}$ there is measurable $E(Q)\subset Q$ such that $|E(Q)|>\eta |Q|$, 
    \item the family $(E(Q))_{Q\in \mathcal{S}}$ is disjoint.
\end{enumerate}
On the other hand, we define a sparse family as a family $\mathcal{P}$ of measurable sets satisfying 
\begin{equation}\label{our}
    \sum_{P\in \mathcal{P}}\chi_P\leq K,
\end{equation}
for some constant $K>0$. 
Evidently, family satisfying (a) and (b) need not satisfy \eqref{our} (take $\{(0,2^{-n})\}_{n\in \mathbb{N}}$ for example). Vice versa, in \eqref{our}, we do not even put any restriction on the shape of sets from $\mathcal{P}$. Nevertheless, the principle is to control the overlapping of sets from $\mathcal{P}$ ($\mathcal{S}$, respectively), so the idea remains the same. 
 It seems that differences follow from different purposes. The classical sparseness is best fitted to the weighted inequalities (for Muckenhoupt weights), where it is usually applied, while \eqref{our} ensures uniform boundedness of all $T_{\mathcal{S}}$ in each r.i. Banach function space. From this point of view, the essence of the method is the same, i.e. having an operator $T$ (usually convolution type operator), we look for sparse family $\mathcal{S}$, depending on $f$ and satisfying $Tf\leq C T_{\mathcal{S}}f$, assuming at the same time that the sparseness is uniform in the sense of constant $\eta$ or $K$, respectively.  

 Notice, finally, that the classical sparse domination has already been applied in the very recent paper \cite{DuDS23} to prove the general Gagliardo--Nirenberg inequality for weighted Lebesgue spaces with Muckenhoupt weights. 
 \end{Remark}

\begin{Remark}
   As we mentioned, our method fits r.i. Banach function spaces, and in this shape, it cannot be extended to arbitrary nonsymmetric Banach function space. In fact, uniform boundedness in $X$ of all $T_{\mathcal{P}}$ with $\sum_{P\in \mathcal{P}}\chi_P\leq K$  is equivalent to $X$ being rearrangement invariant (see for example \cite{Kik06}). On the other hand, in the one-dimensional case, we use only sparse families composed of intervals; thus, such $T_{\mathcal{P}}$'s are uniformly bounded when the maximal operator is bounded. Then, however, in the light of Ka{\l}amajska--Mazya--Schaposhnikova inequality, our method gives no new information for norm inequality. 
 
\end{Remark}

\begin{Remark}
    We also wish to point out the simplicity and transparency of the applied method, especially in the one-dimensional case. In fact, the proof of Lemma \ref{sparse1dim} gives a glimpse into the nature of the Gagliardo--Nirenberg inequality. Namely, we see how the inequality \eqref{KLIC} can be factorized into two following ones 
$$
        |u'|\leq C \sum_{P\in \mathcal{P}}\int_P|u''(s)|ds \chi_P
$$
    and 
$$
        |u'|\leq C \sum_{P\in \mathcal{P}}  \frac{1}{|P|^2} \int_P|u(s)|ds \chi_P,
$$
where $P$ is the sparse family from Lemma \ref{sparse1dim}.   
\end{Remark}

\begin{Remark}
    Statements of Theorem \ref{crux} and Corollary \ref{corcrux} hold for smooth functions of compact support, while in a one-dimensional case, the assumption on compact support is not needed. While removing the smoothness assumption may be technically challenging if possible, it seems  Theorem \ref{crux} should hold without assumption on compact support. Thus, the question is if Theorem \ref{crux} is true for each smooth function in $W^2(L^1+L^{\infty})$ (here we mean the function itself and all its derivatives up to the second order belong to $L^1+L^{\infty}$).

\end{Remark}

\renewcommand{\thefootnote}{\fnsymbol{footnote}}

\bibliographystyle{plain}

\end{document}